\newtheorem{thm}{Theorem}[section]
\newtheorem{lem}[thm]{Lemma}
\theoremstyle{definition}
\newtheorem{defin}[thm]{Definition}
\newtheorem{rem}[thm]{Remark}
\numberwithin{equation}{section}
\begin{document}


\baselineskip=20pt


\title{Can points of bounded orbits surround points of unbounded orbits ?}

\author[J.~Mai]{Jiehua Mai}
\address{School of Mathematics and Quantitative
Economics, Guangxi University of Finance and Economics, Nanning, Guangxi, 530003, P. R. China \&
 Institute of Mathematics, Shantou University, Shantou, Guangdong, 515063, P. R. China}
\email{jiehuamai@163.com; jhmai@stu.edu.cn}

\author[E.~Shi]{Enhui Shi*}
\thanks{*Corresponding author}
\address{School of Mathematics and Sciences, Soochow University, Suzhou, Jiangsu 215006, China}
\email{ehshi@suda.edu.cn}

\author[K.~Yan]{Kesong Yan}
\address{School of Mathematics and Statistics, Hainan Normal
University,  Haikou, Hainan, 571158, P. R. China}
\email{ksyan@mail.ustc.edu.cn}

\author[F.~Zeng]{Fanping Zeng}
\address{School of Mathematics and Quantitative
Economics, Guangxi University of Finance and Economics, Nanning, Guangxi, 530003, P. R. China}
\email{fpzeng@gxu.edu.cn}

\begin{abstract}
We show a somewhat surprising result: if $E$ is a disk in the plane $\mathbb R^2$, then there is a
homeomorphism $h:\mathbb R^2\rightarrow\mathbb R^2$ such that, for every $x\in\partial E$, the orbit
$O(x, h)$ is bounded, but for every $y\in {\rm Int}(E)$, the orbit $O(y, h)$ is doubly divergent.
To prove this, we define a class of homeomorphisms on the square $[-1, 1]^2$, called normally rising
homeomorphisms, and show that a normally rising homeomorphism  can have very complex
$\omega$-limit sets and $\alpha$-limt sets, though the homeomorphism itself looks very simple.
\end{abstract}

\keywords{$\omega$-limit set, $\alpha$-limit set, homeomorphism, plane, divergent orbit}
\subjclass[2010]{37E30}

\maketitle

\pagestyle{myheadings} \markboth{J. Mai, E. Shi, K. Yan, and F. Zeng}{Limit Sets for Planar Homeomorphisms}

\section{Introduction}

By a {\it {dynamical system}}, we mean a pair $(X, f)$, where the {\it phase space} $X$ is a metric space and $f:X\rightarrow X$ is a continuous map. For $x, y\in X$, if there is a sequence of positive integers $n_1<n_2<\cdots$ such that $f^{n_i}(x)\rightarrow y$  then we call $y$ an {\it{{$\omega$-limit point}}}  of $x$.
We denote by  $\omega_f(x)$ or $\omega(x, f)$ the set of all $\omega$-limit points  of $x$ and call it the {\it $\omega$-limit set} of $x$.
The $\omega$-limit sets  are important in understanding the long term behavior of a dynamical system and the properties of which have been
intensively studied. It is well known that, if $X$ is a compact metric space, then $\omega_f(x)$ is nonempty, closed and strongly $f$-invariant for any $x\in X$. Bowen \cite{Bo75} gave an intrinsic characterization of abstract $\omega$-limit sets as those having no non-trivial filtrations and used shadowing to study the $\omega$-limit sets of Axiom A diffeomorphisms.  Hirsch, Smith, and Zhao \cite{HSZ01} showed that the $\omega$-limit set of any precompact orbit is internally chain transitive; the opposite direction is shown for tent maps with periodic critical points by Barwell-Davies-Good \cite {BDG12}, and for subshifts of finite type by Barwell-Good-Knight-Raines \cite{BGKR10}, respectively.  Barwell, Good, Oprocha, and Raines \cite{BGOR13} showed the equivalence between internal chain transitivity, weak incompressibility, and being an $\omega$-limit set for topologically hyperbolic systems. Good and  Meddaugh \cite{GM18} studied the relations between the collections of all $\omega$-limit sets and those of all internally chain transitive sets under various shadowing properties. The notion of $\omega$-limit set is also key in several definitions of attractors and chaos (see e.g. \cite{Mi85, Li93}).

\medskip
For one-dimensional systems, the structures of $\omega$-limit sets have been well characterized.
For interval maps $f:I\rightarrow I$ and $x\in I$, Blokh, Bruckner, Humke, and  Sm\'ital \cite{BBHS96} showed that
the family of all $\omega$-limit sets of $f$ forms a closed subset of the hyperspace  of $I$  endowed with the Hausdorff metric,
and Agronsky, Bruckner, Ceder,  and Pearson \cite{ABCP89} showed that $\omega_f(x)$ is either a finite periodic orbit, or an infinite nowhere-dense set, or the union of periodic nondegenerate subintervals of $I$. These results were extended to the cases when the phase space is
either a circle \cite{P02}, or a graph \cite{MS07, HM06, FHO22}, or a dendrite \cite{AEO07}, or a hereditarily locally connected continua \cite{S08}, or a quasi-graph \cite{MS17}.
However, when the phase space $X$ has dimension $\geq 2$, only partial results are known. For instance,  Agronsky and Ceder \cite{AC91} showed that every finite union of the nondegenerate Peano continua of the square $I^k$ is an $\omega$-limit set of some continuous map on the square $I^k$,
and Jim\'enez L\'opez and  Sm\'ital \cite{JS01} found the necessary and sufficient conditions for a finite union of Peano continua to be an $\omega$-limit set of a triangular map.  The family of all $\omega$-limit sets of the Stein-Ulam spiral map was  identified by Bara\'nski and Misiurewicz \cite{BM10, KM10}.
\medskip

Let $\mathbb Z$, $\mathbb Z_+$, and $\mathbb Z_-$ be the sets of integers, nonnegative integers, and nonpositive integers, respectively. Let $X$ be a topological space 
and $f:X\rightarrow X$ be a homeomorphism. Then the sets  $O(x, f)\equiv\{f^n(x):n\in\mathbb Z\}$, $O_+(x, f)\equiv\{f^n(x):n\in\mathbb Z_+\}$, and $O_-(x, f)\equiv\{f^{-n}(x):n\in\mathbb Z_-\}$ are called the {\it orbit}, {\it positive orbit}, and {\it negative orbit} of $x$, respectively.
For $x, y\in X$, if $y\in \omega_{f^{-1}}(x)$, then we call $y$ an {\it $\alpha$-limit point} of $x$. We denote by $\alpha_f(x)$ or $\alpha(x, f)$ the set of all $\alpha$-limit points of $x$ and call it the {\it $\alpha$-limit set} of $x$. 

\medskip

The aim of the paper is to study the $\omega$-limit sets and $\alpha$-limit sets of homeomorphisms on the plane $\mathbb R^2$. 
This topic has also been discussed by some ahtours. For instance, by means of $\omega$-limit sets and $\alpha$-limit sets,  Handel \cite{Ha99} obtained a fixed point theorem for homeomorphisms on the plane.
In Section 2, we introduce a class of homeomorphisms on the square $J^2$ where $J=[-1, 1]$, called {\it normally rising homeomorphisms}.
Every normally rising homeomorphism fixes point-wise the top edge and bottom edge of $J^2$, and moves up other horizontal line segments.
So, a normally rising homeomorphism looks very simple. However, the first main result we obtained shows that the $\omega$-limit set
and the $\alpha$-limit set of a normally rising homeomorphism can be very complex. Actually, any family of predescribed reasonable sets
can always be realised as the limit sets of a normally rising homeomorphism.
\medskip

For $s\in J$, let $J_s=J\times \{s\}$ and $\mathcal C_s$ be the collection of
all nonempty connected closed subsets of $J_s$. A map $\phi:J\rightarrow \mathcal C_s$ is {\it increasing}
if the abscissae and ordinates of the endpoints of $\phi(r)$ are increasing functions of $r$ and is {\it endpoint preserving}
if $\phi(-1)=(-1, s)$ and $\phi(1)=(1, s)$. If $f$ is
a normally rising homeomorphism on $J^2$ and $s\in J$, then $\omega_{sf}(r)\equiv\omega_f(r, s)$ defines an
increasing function $\omega_{sf}$ from $J$ to $\mathcal C_1$. Similarly, we also have an increasing function $\alpha_{sf}$ from
$J$ to $\mathcal C_{-1}$.
\medskip

Let $\mathcal A=\mathcal C_1$ and $\mathcal A'=\mathcal C_{-1}$. Then we have

\begin{thm}\label{main1}
Let \ \,$\mathbb N\,'$ and \ \,$\mathbb N\,''$\ \,be \,two
\,nonempty\, subsets\,\ of \ \,$\mathbb N\,$,\ \,and \;let \;\,${\mathcal V}\,=\,\{\,V_n\,:\,n\,
\in\,\mathbb N\,'\,\}$\ \ and \ \ ${\mathcal W}\,=\,\{\,W_j\,:\,j\,\in\,\mathbb N\,''\,\}$ \ be \,two\,
families \,of \;pairwise\ \,disjoint \,nonempty\ \,connected\ \,subsets\ \,of \;the\,
semi-open interval \ $(0, 1/2]$\;.\ \ For \,each \ $n\in\mathbb N\,'$\ \ and \,each \ $j\in\mathbb N\,''$\,,\ \ let \ \,$\mbox{\large$\omega$}_n\,:\,J\,\to\;\mathcal{A}$\ \; and \ \,$\mbox{\large$\alpha$}_j\,:\,J\, \to \;\mathcal{A}'$ \ be \,given \,increasing \,and\,\ endpoint \,preserving \,maps. \,Then \,there \,exists \,a \,normally \,rising \,homeomorphism \ $f\,:\,J^{\,2}\,\to\;J^{\,2}$\ \,such\ \,that \ $\mbox{\large$\omega$}_{sf\,}=\ \mbox{\large$\omega$}_n$\ \ for \,any \ $n \in\mathbb N\,'$\ \ and \,any \ $s\in V_n$\ , \ and \ \,$\mbox{\large$\alpha$}_{tf\,}=\ \mbox{\large$\alpha$}_j$ \ for
\,any \ $j \in\mathbb N\,''$\ \ and \,any \ $t\in W_j$.
\end{thm}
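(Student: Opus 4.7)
My plan is to construct $f$ as an explicit skew product $f(r, s) = (\phi(r, s), \psi(s))$, where $\psi : J \to J$ is an increasing homeomorphism fixing $\pm 1$ with $\psi(s) > s$ on $(-1, 1)$, and $\phi(\cdot, s) : J \to J$ is, for each $s$, an increasing homeomorphism fixing $\pm 1$. Such an $f$ is automatically normally rising: it fixes the top and bottom edges pointwise and sends each horizontal segment $J \times \{s\}$ to $J \times \{\psi(s)\}$, which lies strictly above it for $s \in (-1, 1)$. Under this ansatz the forward orbit of $(r, s)$ is $(\tau^s_k(r), \psi^k(s))$ with $\tau^s_k = \phi(\cdot, \psi^{k-1}(s)) \circ \cdots \circ \phi(\cdot, s)$, so $\omega_f(r, s)$ is the set of accumulation points of $(\tau^s_k(r))_k$ lifted to height $1$. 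The $\alpha$-limit is governed dually by backward iteration.

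I first choose $\psi$ so that the intervals $\psi^k(V_n)$ for $k \geq 0$, $n \in \mathbb{N}'$, and $\psi^{-k}(W_j)$ for $k \geq 0$, $j \in \mathbb{N}''$ are pairwise disjoint, which is easily arranged since the $V_n, W_j$ are disjoint connected subsets of $(0, 1/2]$. After a slight enlargement I obtain pairwise disjoint open intervals $I^n_k \supset \psi^k(V_n)$ and $\tilde I^j_k \supset \psi^{-k}(W_j)$, with the $I^n_k$ accumulating only at $1$ and the $\tilde I^j_k$ only at $-1$. Writing $\omega_n(r) = [c_n(r), d_n(r)] \times \{1\}$, I fix a sequence $(\lambda_k)_{k \geq 1}$ enumerating a dense subset of $[0, 1]$ with $|\lambda_{k+1} - \lambda_k| \to 0$, and I define increasing homeomorphisms $\sigma^n_k$ of $J$ recursively by the requirement $\sigma^n_k \circ \cdots \circ \sigma^n_1(r) = (1 - \lambda_k) c_n(r) + \lambda_k d_n(r)$. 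Analogous shears $\tilde\sigma^j_k$ realize the $\alpha_j$.

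I then set $\phi(\cdot, u) \equiv \sigma^n_k$ on each $I^n_k$ and $\phi(\cdot, u) \equiv \tilde\sigma^j_k$ on each $\tilde I^j_k$, extend by the identity off these intervals, and smoothly interpolate near each boundary through the contractible space of increasing homeomorphisms of $J$ fixing $\pm 1$. For any $s \in V_n$, each iterate $\psi^k(s)$ lies in $\psi^k(V_n) \subset I^n_k$, hence $\phi(\cdot, \psi^k(s)) = \sigma^n_k$ and $\tau^s_k(r) = (1 - \lambda_k) c_n(r) + \lambda_k d_n(r)$, whose accumulation set is exactly $[c_n(r), d_n(r)]$, giving $\omega_f(r, s) = \omega_n(r)$. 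The symmetric computation yields $\alpha_f(r, t) = \alpha_j(r)$ for $t \in W_j$; the prescription is uniform across each $V_n$, so the same limit function is realized throughout the strip.

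The main obstacle I anticipate is verifying that $\phi$ is jointly continuous and hence that $f$ is a homeomorphism. Because the intervals $I^n_k$ accumulate at $u = 1$ (and $\tilde I^j_k$ at $u = -1$), joint continuity forces the prescribed shears to tend uniformly to the identity as $k \to \infty$. A direct estimate gives $\|\sigma^n_k - \mathrm{id}\|_\infty \leq 2|\lambda_{k+1} - \lambda_k|$, which tends to $0$ by the chosen ordering of $(\lambda_k)$, and the same control holds for the $\tilde\sigma^j_k$. The boundary interpolations involve only a bounded distance in the space of homeomorphisms of $J$ and can always be realized continuously by isotopy, so no genuine topological obstruction arises.
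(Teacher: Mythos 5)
Your skew-product ansatz is essentially forced (a normally rising homeomorphism necessarily has the form $f(r,s)=(\phi(r,s),f_{01}(s))$ with each $\phi(\cdot,s)$ an increasing homeomorphism of $J$ fixing $\pm 1$), and the idea of making the fibre compositions oscillate densely between the two endpoint functions of $\mbox{\large$\omega$}_n$ is the right one. But the core step fails: you define the shears by requiring
$\sigma^n_k\circ\cdots\circ\sigma^n_1(r)=(1-\lambda_k)c_n(r)+\lambda_k d_n(r)$, and the left-hand side is a finite composition of homeomorphisms of $J$, hence itself a homeomorphism, while the right-hand side in general is not. An increasing, endpoint-preserving $\mbox{\large$\omega$}_n$ only forces $c_n,d_n$ to be increasing with $c_n(\pm1)=d_n(\pm1)=\pm1$; they may have jump discontinuities and be constant on intervals. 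For instance $\mbox{\large$\omega$}_n(r)=\{(0,1)\}$ for all $r\in(-1,1)$ is admissible, and then $(1-\lambda_k)c_n+\lambda_k d_n\equiv 0$ on $(-1,1)$, which no composition of homeomorphisms can equal. So the $\sigma^n_k$ you need do not exist, and your estimate $\|\sigma^n_k-\mathrm{id}\|_\infty\le 2|\lambda_{k+1}-\lambda_k|$ rests on the same false premise. The targets can only be realized asymptotically (each point moved a controlled fraction of the way toward $c_n(r)$ or $d_n(r)$ per block of iterates), and then one needs a separate argument, absent from your proposal, to recover the limit sets at the discontinuity points of $c_n,d_n$; the paper does this by working first on a countable dense set $R$ containing all discontinuities and then squeezing the remaining points by monotonicity.

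A second, independent gap: the pairwise disjoint open intervals $I^n_k\supset \psi^k(V_n)$ need not exist, because the sets $V_n$, though pairwise disjoint, may accumulate on one another (e.g.\ singletons $V_n$ converging to a point of some $V_m$). Worse, any scheme that realizes the prescription \emph{exactly} at every finite stage on all of $V_n$ forces, by continuity of $f^k$ in the base point, that $c_n\to c_m$ and $d_n\to d_m$ along accumulating subsequences -- a constraint the arbitrary data $\mbox{\large$\omega$}_n$ need not satisfy. This is why the paper exhausts each $V_n$ by closed truncations $V_{nk}$ and, at stage $k$, imposes the exact prescription only on the finite set $R_k\times\bigcup_{n\le k}V_{nk}$, interpolating linearly elsewhere; each $s\in V_n$ then obeys the $\mbox{\large$\omega$}_n$-dynamics only from some stage onward, which is enough for the $\omega$-limit set but avoids any continuity conflict. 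Without some such exhaustion your interpolation step cannot be carried out, so the proof as written does not go through.
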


Let $d$ be the Euclidean metric on $\mathbb R^2$. Let $f:\mathbb R^2\rightarrow \mathbb R^2$ be a homeomorphism and $x\in\mathbb R^2$. The orbit
$O(x, f)$ is said to be  {\it{positively divergent}} (resp. {\it{negatively divergent}}) if $d(f^n(x), (0, 0))\rightarrow \infty$ (resp. $d(f^{-n}(x), (0,0))\rightarrow \infty$) as $n\rightarrow \infty$. If $O(x, f)$ is both positively divergent and negatively divergent,
then we say $O(x, f)$ is {\it doubly divergent}.

\medskip
A subset $E$ of $\mathbb R^{\,2}$ is a {\it disk} if it is homeomorphic to the unit closed
ball of $\mathbb R^{\,2}$. We use $\partial E$ and $\stackrel{\circ}{E}$ (or, ${\rm Int}(E)$) to denote the boundary
and interior of $E$, respectively. By means of Theorem \ref{main1},  we get in Section 3 the following somewhat surprising result.

\begin{thm}
Let \ $E$\ \ be a disk in \ $\mathbb R^{\,2}$\,.\,\
Then there exists a homeomorphism \ $h:\mathbb R^{\,2}\to\mathbb R^{\,2}$\ \;such that\,, \ for any
\ $x\in\,\partial E$\,,\ \ the \,orbit \ $O(x,h)$\ \;is \,bounded\;, \,but \,for any\ \ $y
\in\, \stackrel{\circ}{E}$\,,\ \ the \,orbit \ $O(y,h)$ \;is \,doubly\,-\;\!divergent.
\end{thm}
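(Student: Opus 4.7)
The plan is to reduce, via a homeomorphism of $\mathbb R^{\,2}$, to a specific model disk $E$ for which we can write down $h$ explicitly. Conjugation by a homeomorphism of $\mathbb R^{\,2}$ preserves orbit boundedness and double divergence, because every such homeomorphism is proper: a continuous bijection of $\mathbb R^{\,2}$ onto itself sends unbounded sequences to unbounded ones, by continuity of the inverse. Combined with the Jordan--Schoenflies theorem this reduces the theorem to building a single example.

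\medskip

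For the example, first apply Theorem \ref{main1} with $\mathcal V=\{V_{1}\}$, $\mathcal W=\{W_{1}\}$ (say $V_{1}=W_{1}=(0,1/2]$) and appropriate increasing endpoint-preserving maps $\omega_{1},\alpha_{1}$; this produces a normally rising $f:J^{\,2}\to J^{\,2}$ with controlled asymptotic behavior on $J_{1}\cup J_{-1}$. Pick a homeomorphism $\Phi$ of $J^{\,2}$ onto a model disk $E\subset\mathbb R^{\,2}$ and transport $f$ to $E$ by $h|_{E}=\Phi\circ f\circ\Phi^{-1}$. Under $\Phi$ the top and bottom edges $J_{\pm 1}$ correspond to two arcs of $\partial E$, and the four corners of $J^{\,2}$ correspond to four distinguished points of $\partial E$. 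Extend $h$ to $\mathbb R^{\,2}\setminus E$ by attaching, at these two arcs, pairs of \emph{escape channels} reaching to infinity in $\mathbb R^{\,2}$, and defining $h$ on them so that it extends $h|_{E}$ continuously and guides orbits through the channels to infinity in both forward and backward time. The freedom in $\omega_{1},\alpha_{1}$ afforded by Theorem \ref{main1} is used to ensure that the matching across $\partial E$ and at the four distinguished corners can be done consistently.

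\medskip

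Verification is then mostly bookkeeping. For $x\in\partial E$, the $f$-orbit of $\Phi^{-1}(x)$ lies on a lateral edge of $J^{\,2}$ and converges to a corner, so $O(x,h)$ stays in a compact subset of $\mathbb R^{\,2}$ and is bounded. For $y\in{\rm Int}(E)$, the $f$-orbit of $\Phi^{-1}(y)$ accumulates on $J_{1}$ forward and on $J_{-1}$ backward; under $h$ this is carried through the forward and backward channels out to infinity, so $O(y,h)$ is doubly divergent.

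\medskip

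The main obstacle is continuity: $h$ cannot preserve $\partial E$ setwise, for otherwise the Jordan curve theorem would trap ${\rm Int}(E)$ in a bounded region and contradict double divergence. The extension through the channels must therefore move some points of $\partial E$ off of $\partial E$, yet keep every individual boundary orbit in a compact set (which may depend on the point). Ensuring that this is possible while $h$ remains a homeomorphism of $\mathbb R^{\,2}$ is the delicate part, and it is precisely the flexibility of Theorem \ref{main1} in prescribing the increasing endpoint-preserving maps $\omega_{1},\alpha_{1}$ that makes this matching possible.
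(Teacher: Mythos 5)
Your reduction via the Sch\"onflies theorem and the properness of plane homeomorphisms is fine and matches the paper's use of its Theorem 3.3. The gap is in the model example, and it sits exactly where you yourself flag ``the delicate part.'' If you set $h|_{E}=\Phi\circ f\circ\Phi^{-1}$ with $f:J^{\,2}\to J^{\,2}$ a homeomorphism of the square \emph{onto itself}, then $h(E)=E$, so every orbit starting in $E$ stays in the compact set $E$ for all time, and no point of ${\rm Int}(E)$ can have a divergent orbit. No extension to $\mathbb R^{\,2}\setminus E$ and no system of ``escape channels'' can repair this, because orbits of points of $E$ never enter the complement. You correctly observe that $h$ cannot preserve $\partial E$ setwise, but your own recipe for $h|_{E}$ forces it to preserve all of $E$; the proposal is internally inconsistent at this point, and the appeal to ``the freedom in $\omega_{1},\alpha_{1}$'' does not resolve it.

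The missing idea, which is the actual content of the paper's proof, is that $E$ should \emph{not} correspond to the whole square. The paper identifies the whole plane with the open square via $\psi(r,s)=(\tan(\pi r/2),\tan(\pi s/2))$, so that $E$ corresponds to a small rectangle $F=[-1/2,1/2]\times[1/3,1/2]$ lying strictly inside $J^{\,2}$ and $\partial J^{\,2}$ plays the role of infinity. A normally rising $f$ is chosen (via the paper's Lemma 3.1, which requires \emph{three} sets $V_{1}=\{1/3\}$, $V_{2}=(1/3,1/2)$, $V_{3}=\{1/2\}$ with three different prescribed limit maps --- with a single $V_{1}=(0,1/2]$ and a single $\omega_{1}$, as you propose, the horizontal edges of $F$ would get the same limit sets as the interior points at the same abscissae, which already ruins the dichotomy) so that ${\rm Int}(F)$ limits onto points $u_{2}\in J_{1}$ and $u_{5}\in J_{-1}$, i.e.\ onto infinity, giving double divergence. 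The remaining obstruction is that $\partial F$ also limits onto $J_{1}\cup J_{-1}$; this is removed by a quotient semi-conjugacy $\xi$ collapsing four arcs $[u_{i}x_{i}]$, which drags the limit points $u_{1},u_{3},u_{4},u_{6}$ of $\partial F$ to interior points $x_{1},x_{3},x_{4},x_{6}$ of the square --- finite points of the plane --- while leaving $u_{2},u_{5}$ at infinity. That collapsing construction is what your argument would need to supply to close the gap.
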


\section{Limit sets of normally rising homeomorphisms on $J^2$}
In this paper\,,\ \,for\ \ $r\,,\,s\,\in\,\mathbb R$\,,\ \,we use \ $(r,s)$ \ to
denote a point in \,$\mathbb R^{\,2}$\,.\ \ If \ $r\,<\,s$\;,\ \ we also use \ $(r,s)$\ \ to
denote an open interval in\ \,$\mathbb R$.\,\,\,These will not lead to confusion\,.\,\,\,For
example\,, \,if we write \ $(r,s)\,\in\,X$\,, \ then \ $(r,s)$ \ will be a point\,; \
if we write \ $t\,\in\,(r,s)$\,,\ \ then \ $(r,s)$ \ will be a set\,,\ \,and hence is
an open interval\,.
\medskip

We always write\ \ $J=\,[\,-1\,,\,1\,]$\;.\ \ \,Define the homeomorphism
\ $f_{01}\,:\,J\,\to\;J$\ \ by\ , \ \,for any \ $s\,\in\,J$,
$$
f_{01}(s)\,=\;
   \left\{\begin{array}{ll}(s+1)/2\,, & \mbox{\ \ \ \ \ if \ }\;0\,\le\,s\,\le1\;; \\
                        \!\;\;s+1/2\,, & \mbox{\ \ \ \ \ if \ \ }-1/2\,\le\,s\le\,0\;;  \\
                             \;2s+1\,, & \mbox{\ \ \ \ \ if \ }-1\,\le\,s\le\,-1/2\ .
\end{array}\right.
$$
Define the homeomorphism \ $f_{02}\,:\,J^{\,2}\,\to\;J^{\,2}$\ \ by\ , \ \,for any \
$(r,s)\,\in\,J^{\,2}$\ ,
$$
f_{02}(r,s)\ =\ \big(\,r\,,\,f_{01}(s)\,\big)\ .
$$

For any compact connected manifold $M$,\ \ denote by \,$\partial M$ \,the boundary\,,
\,and by $\stackrel{\ \circ}{M}$ \,the interior of \,$M$. \ Specially, \ we have \ $\partial J\,
=\,\{\,-1\,,\,1\:\!\}$\,, \ and \ $\stackrel{\ \circ}{J}\,=\,(\,-1\,,1\:\!)$\,. \ Note that
\ $\partial J^{\,2}$ \ is \ $\partial(J^{\,2})$\,,\ \ not \ $(\partial J)^{\,2}$\,.

\begin{defin}\label{normal rising} \ \ For any\ \ $s\,\in\,J$\,,\ \ \,write \ $J_s\,=
\,J\times\{s\}$\;. \ A homeomorphism \ $f:J^{\,2}\to\,J^{\,2}$ \ is said to be \,{\it
normally rising} \ if
$$
f\,|\;\partial J^{\,2}\,=\,f_{02}\,|\;\partial J^{\,2}\,, \hspace{10mm}    \mbox{and}
\hspace{8mm} f(J_s)\;=\,f_{02}(J_s) \hspace{5mm} \mbox{for \ any}\ \ \,s\,\in\,J.
$$
\end{defin}

 Define the maps \ $p\;,\;q\;:\,\Bbb{R}^{\,2}\to\,\Bbb{R}$ \ by \ $p(x)\,=\,r$
\ and \ $q(x)\,=\,s$\ \ for any \ $x\,=\,(r,s)\,\in\,\Bbb{R}^{\,2}$\,,\ \;that is,\ \
we denote by \ $p(x)$\ \ and \ $q(x)$\ \ the abscissa and the ordinate of \,$x$\,,\,\
respectively\,. \,By the definition\,, \ the following lemma is obvious\,.

\begin{lem}\label{rising-prop}Let \ $f:J^{\,2}\to J^{\,2}$ \ be a normally rising
homeomorphism\,. \;Then

\vspace{1mm}(1)\ \ \ $f\,|\,{J_{-1}\cup J_1}$ \ is the identity map\ ;

\vspace{1mm}(2)\ \ \ $pf(r_1\,,\,s)\,<\,pf(r_2\,,\,s)$\;,\ \ for any \ $\{r_1\,,\,r_2
\,,\,s\}\,\subset\,J$ \ with \ $r_1\,<\,r_2$ ;

\vspace{1mm}(3)\ \ \ $\omega_f(x)\,=\,\alpha_f(x)\,=\,\{x\}$\;, \ \ for any \ $x\,\in
\,J\times\,\partial J$ ;

\vspace{1mm}(4)\ \ \ $\omega_f(r,s)=(r,1)$\;,\ \ \,and \ \,$\alpha_f(r,s)=(r,-1)$\;,\ \ \ for
any \ $(r,s)\,\in\;\partial J\times\stackrel{\ \circ}{J}$\;;

\vspace{1mm}(5)\ \ \ for any \ $(r,s)\in\ \stackrel{\ \circ}{J}\,^{\,2}=(-1, 1)^{\,2}$\,, \ \
$\omega_f(r,s)$ \ is a nonempty connected closed subset of $J_1$\ ,\ \ and $\alpha_f(r,s)$ is a
nonempty connected closed subset of \,$J_{-1}$\ .
\end{lem}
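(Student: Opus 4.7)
The lemma has five parts, of which (1)--(4) follow directly from the defining relations $f|\partial J^{\,2}=f_{02}|\partial J^{\,2}$ and $f(J_s)=J_{f_{01}(s)}$, combined with the elementary dynamics of $f_{01}$ (its only fixed points in $J$ are $\pm 1$, and $f_{01}(t)>t$ on $(-1,1)$, so $f_{01}^n(s)\to 1$ and $f_{01}^{-n}(s)\to -1$ for every $s\in(-1,1)$). The only genuinely nontrivial step is the connectedness assertion in (5), and I will direct most of the effort there.

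I plan to dispatch (1)--(4) briefly. For (1), $J_{-1}\cup J_1\subset \partial J^{\,2}$ and $f_{02}(r,\pm 1)=(r,\pm 1)$, so $f$ fixes these two edges pointwise. Property (3) is then immediate: every point of $J\times\partial J=J_{-1}\cup J_1$ is a fixed point of $f$. For (2), the relation $f(J_s)=J_{f_{01}(s)}$ makes $f|J_s$ a homeomorphism of arcs, and the constraint $f|\partial J^{\,2}=f_{02}|\partial J^{\,2}$ forces $f(\pm 1,s)=(\pm 1,f_{01}(s))$; since an endpoint-preserving homeomorphism between arcs is necessarily monotonically increasing, $r\mapsto pf(r,s)$ is strictly increasing. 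For (4), a point $(r,s)\in\partial J\times\stackrel{\ \circ}{J}$ lies in $\partial J^{\,2}$, which is $f$-invariant, so induction gives $f^n(r,s)=(r,f_{01}^n(s))$ for every $n\in\mathbb{Z}$, and the claim reduces to the convergences noted above.

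The substantive work lies in (5). Writing $x_n=f^n(r,s)\in J_{f_{01}^n(s)}$, one immediately has $q(x_n)\to 1$, so every limit point has ordinate $1$ and hence $\omega_f(r,s)\subset J_1$; nonemptiness and closedness are standard consequences of the compactness of $J^{\,2}$. The main obstacle is connectedness, and my plan is to exploit (1): since $f$ restricts to the identity on $J_1$, continuity of $f$ and compactness of $J^{\,2}$ yield, for each $\varepsilon>0$, a $\delta>0$ such that $d(x,J_1)<\delta$ implies $d(f(x),x)<\varepsilon$. Combined with $q(x_n)\to 1$, this forces $d(x_{n+1},x_n)\to 0$. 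I then invoke the standard fact that a bounded sequence in a metric space with vanishing consecutive gaps has connected set of limit points: if $\omega_f(r,s)$ decomposed as a disjoint union $A\cup B$ of two nonempty compacta at distance $\rho>0$, then choosing $\varepsilon<\rho/3$ would eventually trap the orbit in the $\rho/3$-neighbourhoods of $A\cup B$, yet the orbit cannot jump between the $A$- and $B$-neighbourhoods in a single step of size $<\rho/3$, contradicting the fact that both $A$ and $B$ must be accumulation sets. The assertion for $\alpha_f(r,s)$ follows by the symmetric argument applied to $f^{-1}$, which still permutes horizontal lines and, by (1), fixes $J_{-1}$ pointwise.
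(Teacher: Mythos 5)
Your proof is correct. Note that the paper itself offers no argument here: Lemma 2.2 is introduced with the single remark ``By the definition, the following lemma is obvious,'' so there is nothing to compare against step by step. Your treatment of (1)--(4) matches what the authors evidently have in mind (the boundary condition $f|\partial J^{\,2}=f_{02}|\partial J^{\,2}$ pins down $f$ on the four edges, the level condition $f(J_s)=J_{f_{01}(s)}$ plus endpoint preservation forces each $f|J_s$ to be an increasing arc homeomorphism, and the dynamics reduce to $f_{01}^{\,n}(s)\to 1$, $f_{01}^{-n}(s)\to -1$ on $(-1,1)$). The one point that is genuinely not ``obvious'' is the connectedness in (5), since $\omega$-limit sets of homeomorphisms of compact spaces are not connected in general; you correctly isolate this and supply the standard remedy: $f={\rm id}$ on $J_1$ together with compactness gives $d\bigl(f^{\,n+1}(x),f^{\,n}(x)\bigr)\to 0$, and a bounded sequence with vanishing consecutive gaps has connected limit set (your separation argument with the $\rho/3$-neighbourhoods is the usual proof of that fact, and the tacitly used step --- that the orbit tail eventually lies in any neighbourhood of its $\omega$-limit set --- is also a routine compactness argument). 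The symmetric argument for $\alpha_f$ via $f^{-1}$, which fixes $J_{-1}$ pointwise, is likewise fine. In short, your write-up supplies a complete and correct proof of a lemma the paper leaves unproved, with the right emphasis on the only nontrivial claim.
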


 Denote by ${\mathcal A}$ \;the family of all nonempty connected closed subsets
of $J_1$ , \ and by \,$\mathcal A'$ \;the family of all nonempty connected closed subsets
of $J_{-1}$ . \ From Lemma \ref{rising-prop} we see that \
$$
\omega_f(x)\,\in\,{\mathcal A}\ ,\ \ \ \ \ \mbox{ and}\ \ \ \ \ \ \alpha_f(x)\,\in\,{\mathcal A'}\ ,\
\ \ \ \mbox{for \ any}\ \ \ x\,\in\,J\times\stackrel{\ \circ}{J}\ .
$$

\begin{defin}\label{increasing} Let \ $v_1=(\,-1\,,\;\!1)\ ,\ \ v_2=(\,1\,,\;\!
1)\ , \ v_3=(\,-1\,,\,-1)$\ \ \,and \ \,$v_4=(\,1,-1)$\ \ be the four vertices of the
square \,$J^{\,2}$\,. \,\ A map \ \,\mbox{\large$\omega$}$\;:\,J\,\rightarrow\;\mathcal{A}$ \ is said
to be \;{\it increasing} \;if \
$$
\min\big(\,p\;\mbox{\large$\omega$}(r_1)\,\big)\,\leq\;\min\big(\,p\;\mbox{\large$\omega$}(r_2
)\,\big)\ \ \ \ \ \ \mbox{and}\ \ \ \ \ \ \max\big(\,p\;\mbox{\large$\omega$}(r_1)\,\big)
\,\leq\;\max\big(\,p\;\mbox{\large$\omega$}(r_2)\,\big)
$$
for any $\{r_1\,,\,r_2\}\in J$ \ with \ $r_1<r_2$\;. \ The map \ $\mbox{\large$\omega$}$\
\ is said to be \;{\it endpoints preserving} \;if \ $\mbox{\large$\omega$}(-1)=\{v_1\}$ \
and \ $\mbox{\large$\omega$}(1)=\{v_2\}$\,. \

\vspace{1mm}Similarly\,, \,a map \ $\mbox{\large$\alpha$}\;:\,J\,\rightarrow\;\mathcal{A}'$ \ is said
to be \,{\it increasing} \;if \
$$
\min\big(\,p\;\mbox{\large$\alpha$}(r_1)\,\big)\,\leq\;\min\big(\,p\;\mbox{\large$\alpha$}(r_2
)\,\big)\ \ \ \ \ \ \mbox{and}\ \ \ \ \ \ \max\big(\,p\;\mbox{\large$\alpha$}(r_1)\,\big)
\,\leq\;\max\big(\,p\;\mbox{\large$\alpha$}(r_2)\,\big)
$$
for any $\{r_1\,,\,r_2\}\in J$ \ with \ $r_1<r_2$\;. \ The map \ $\mbox{\large$\alpha$}$\
\ is said to be \;{\it endpoints preserving} \;if \ $\mbox{\large$\alpha$}(-1)=\{v_3\}$ \
and \ $\mbox{\large$\alpha$}(1)=\{v_4\}$\,.
\end{defin}

From \,Lemma \ref{rising-prop} \,we get the following lemma at once.

\begin{lem}\label{rising-prop2}Let \,$f\,:\,J^{\,2}\rightarrow\,J^{\,2}$\,\,\,be a\,\
normally \,rising \,homeomorphism\,.\ \ \vspace{1mm} For\ \,any \,given\ \ $s\in\stackrel{\
\circ}{J}$\,, \ define\; maps \ \,$\mbox{\large$\omega$}_{sf}\;:\,J\,\rightarrow\;\mathcal{A}$ \ and\
\ \,$\mbox{\large$\alpha$}_{sf}\;:\,J\,\rightarrow\;\mathcal{A}'$ \ by
\begin{equation}\label{eq:2.1}
 {\large\omega}_{sf}(r)\;=\ \omega_f(r,s) \ \ \ \ and \ \ \ \
 {\large\alpha}_{sf}(r)\;=\ \alpha_f(r,s), \ \ \
for\ \ any \ \ r\,\in\,J\;.
\end{equation}
Then \,both \ \,$\mbox{\large$\omega$}_{sf}$ \ and \ \,$\mbox{\large$\alpha$}
_{sf}$\ \ are\ \,increasing \,and \;endpoints \,preserving\;.
\end{lem}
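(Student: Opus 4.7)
The plan is to exploit the two defining geometric features of a normally rising homeomorphism: that $f$ sends each horizontal slice $J_s$ to another horizontal slice, and that its action on each slice is strictly increasing in the $p$-coordinate (Lemma \ref{rising-prop}(2)). Both claims will follow from pushing these two facts through all iterates.

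I would first dispatch the endpoints-preserving property directly from Lemma \ref{rising-prop}(4): for $s\in\stackrel{\ \circ}{J}$ one has $\omega_f(-1,s)=\{(-1,1)\}=\{v_1\}$ and $\omega_f(1,s)=\{v_2\}$, and symmetrically $\mbox{\large$\alpha$}_{sf}(-1)=\{v_3\}$, $\mbox{\large$\alpha$}_{sf}(1)=\{v_4\}$. So the endpoints-preserving condition is immediate once one reads off Lemma \ref{rising-prop}(4).

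For monotonicity, the key preparatory step is to record two identities. From $f(J_s)=f_{02}(J_s)=J_{f_{01}(s)}$, induction yields $f^{n}(J_s)=J_{f_{01}^{n}(s)}$ for every $n\in\mathbb Z$, so
\[
qf^{n}(r_1,s)\,=\,qf^{n}(r_2,s)\,=\,f_{01}^{n}(s)\qquad(r_1,r_2\in J,\ n\in\mathbb Z).
\]
Iterating Lemma \ref{rising-prop}(2) will then give $pf^{n}(r_1,s)<pf^{n}(r_2,s)$ whenever $r_1<r_2$ and $n\ge 0$; for negative $n$ the same holds because $f|J_s\colon J_s\to J_{f_{01}(s)}$ is a $p$-increasing homeomorphism and hence so is its inverse.

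To conclude I would run a subsequence argument. Fixing $r_1<r_2$, set $u_2=\min(p\,\mbox{\large$\omega$}_{sf}(r_2))$ and pick $n_k\to+\infty$ with $f^{n_k}(r_2,s)\to(u_2,1)$. After extraction, $pf^{n_k}(r_1,s)\to u_1'$ for some $u_1'\in J$; the slice identity forces $qf^{n_k}(r_1,s)\to 1$, so $(u_1',1)\in\mbox{\large$\omega$}_{sf}(r_1)$, and passing to the limit in the strict $p$-inequality yields $u_1'\le u_2$. Hence $\min(p\,\mbox{\large$\omega$}_{sf}(r_1))\le u_2$. The max-inequality is symmetric, starting instead from a sequence that realises $\max(p\,\mbox{\large$\omega$}_{sf}(r_1))$ and extracting a convergent subsequence for $r_2$; the $\alpha$-case is identical after replacing $n_k\to+\infty$ by $n_k\to-\infty$. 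The only subtlety worth flagging is the extension of Lemma \ref{rising-prop}(2) to negative iterates, which is exactly where the slice identity $f^{n}(J_s)=J_{f_{01}^{n}(s)}$ carries the argument.
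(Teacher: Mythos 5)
Your proof is correct and follows the route the paper intends: the paper's entire justification is that the lemma follows ``at once'' from Lemma \ref{rising-prop}, and your argument is exactly the natural elaboration of that claim --- endpoints preservation from Lemma \ref{rising-prop}(4), and monotonicity by iterating the order-preservation of Lemma \ref{rising-prop}(2) on horizontal slices (in both time directions) and passing to limits along subsequences. The details you supply, including the extension of the $p$-monotonicity to negative iterates via the slice identity and the compactness extraction that turns the strict inequality on orbits into the non-strict inequality on $\min$ and $\max$ of the limit sets, are all sound.
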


Write\ \ \ \ \ $t_0\,=\ 0\ ,\ \ \ \ I_1\,=\;\big(\; 0\:,\;\!1/\;\!2\;\big
]\ ,\ \ \ \ D_1\,=\;J\times\big[\;0\:,\;\!1/\;\!2\;\big]$\ ,\ \ \ \ \ \ and \vspace{-0mm}
$$
t_n\,=\;f_{\;\!01}^{\,n}(t_0)\ ,\ \ \ \ \ D_n\,=\;f_{\;\!02}^{\,n-1}(D_1)\ ,\ \ \ \ \
\mbox{for\ \ any}\ \ \ n\;\in\ \mathbb{Z}\ .
$$
Then \ $D_n\,=\;J\times[\,t_{n-1}\,,\,t_n\,]$ , \ and \ $D_n\,\cap\,D_{n+1}\,=\;J_{t_n}$\ .\ \ A main\ result of this\ paper is the following theorem\,, which shows that the \;$\omega$-limit\ \,sets \;and \;$\alpha$-limit sets of homeomorphisms of\ \,$J^{\,2}$\ \,may have
very complex structure\,,\ \,even if these homeomorphisms are normally rising.

\begin{thm}\label{main1'}
Let \ \,$\mathbb N\,'$ and \ \,$\mathbb N\,''$\ \,be \,two
\,nonempty\, subsets\,\ of \ \,$\mathbb N\,$,\ \,and \;let \;\,${\mathcal V}\,=\,\{\,V_n\,:\,n\,
\in\,\mathbb N\,'\,\}$\ \ and \ \ ${\mathcal W}\,=\,\{\,W_j\,:\,j\,\in\,\mathbb N\,''\,\}$ \ be \,two\,
families \,of \;pairwise\ \,disjoint \,nonempty\ \,connected\ \,subsets\ \,of \;the\,
semi-open interval \ $I_1$\;.\ \ For \,each \ $n\in\mathbb N\,'$\ \ and \,each \ $j\in\mathbb N\,''$\,,\ \ let \ \,$\mbox{\large$\omega$}_n\,:\,J\,\to\;\mathcal{A}$\ \; and \ \,$\mbox{\large$\alpha$}_j\,:\,J\, \to \;\mathcal{A}'$ \ be\, arbitarily\,given \,increasing \,and\,\ endpoint \,preserving \,maps. \,Then \,there \,exists \,a \,normally \,rising \,homeomorphism \ $f\,:\,J^{\,2}\,\to\;J^{\,2}$\ \,such\ \,that \ $\mbox{\large$\omega$}_{sf\,}=\ \mbox{\large$\omega$}_n$\ \ for \,any \ $n \in\mathbb N\,'$\ \ and \,any \ $s\in V_n$\ , \ and \ \,$\mbox{\large$\alpha$}_{tf\,}=\ \mbox{\large$\alpha$}_j$ \ for
\,any \ $j \in\mathbb N\,''$\ \ and \,any \ $t\in W_j$, where \,the \,definitions\, of \,\ $\mbox{\large$\omega$}_{sf}$\ \ and \ \,$\mbox
{\large$\alpha$}_{tf}$ \ are \,given \,by \;$(\ref{eq:2.1})$\,.
\end{thm}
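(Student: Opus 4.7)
My plan is to build $f$ in the form $f(r,s) = (h_s(r), f_{01}(s))$, where each $h_s$ is a homeomorphism of $J$ fixing $\pm 1$, depending continuously on $s$, with $h_{\pm 1} = \mathrm{id}$; any such $f$ is automatically normally rising. The forward orbit of $(r_0, s_0)$ has coordinates $s_k = f_{01}^k(s_0) \to 1$ and $r_k = h_{s_{k-1}} \circ \cdots \circ h_{s_0}(r_0)$, so $\omega_f(r_0, s_0)$ equals the accumulation set of $\{(r_k,1)\}$ in $J_1$; symmetrically for $\alpha_f$. The key geometric observation is that for $s_0 \in V_n \subset (0,1/2]$ the $f_{01}$-iterates $s_k$ traverse the strips $(t_{k-1},t_k] \subset (0,1)$, while for $t_0 \in W_j$ the backward iterates remain in $(-1,0]$. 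Hence the $\omega$- and $\alpha$-data occupy disjoint regions of the $s$-axis and can be designed independently.

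For each $n \in \mathbb N'$ write $p\,\omega_n(r) = [a_n(r), b_n(r)]$ and fix $\{q_k\}_{k \geq 1} \subset [0,1]$ dense with $|q_{k+1} - q_k| \to 0$, together with $\epsilon_k \downarrow 0$. Define
\[
\widetilde\phi_k^n(r) \;=\; (1-\epsilon_k)\bigl[(1-q_k)a_n(r) + q_k b_n(r)\bigr] + \epsilon_k r,
\]
a strictly increasing homeomorphism of $J$ fixing $\pm 1$. Set $\psi_k^n = \widetilde\phi_k^n \circ (\widetilde\phi_{k-1}^n)^{-1}$ with $\widetilde\phi_0^n = \mathrm{id}$. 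Density of $\{q_k\}$ gives, for each fixed $r$, that the accumulation set of $\{\widetilde\phi_k^n(r)\}$ is exactly $p\,\omega_n(r)$, while the smallness of $|q_{k+1}-q_k|$ and $\epsilon_k$ yields $\sup_n \|\psi_k^n - \mathrm{id}\|_\infty \to 0$. Construct $\chi_k^j$ analogously to encode each $\alpha_j$. Next prescribe $h_s = \psi_k^n$ on each constraint strip $f_{01}^{k-1}(V_n) \subset (0,1)$ and $h_s = \chi_k^j$ on each $f_{01}^{-k}(W_j) \subset (-1,0]$; these strips are pairwise disjoint. Interpolate $h_s$ continuously by convex combinations on the remaining gaps of $J$, and set $h_{\pm 1} = \mathrm{id}$. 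The resulting $f$ is a normally rising homeomorphism, and a telescoping computation yields $r_k = \widetilde\phi_k^n(r_0)$ for any $s \in V_n$ whose $f_{01}$-orbit lies in the interiors of the constraint strips; hence $\omega_f(r_0, s) = p\,\omega_n(r_0) \times \{1\} = \omega_n(r_0)$, and symmetrically $\alpha_f(r_0, t) = \alpha_j(r_0)$ for $t \in W_j$.

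The main technical obstacle is the continuity of $s \mapsto h_s$ at common boundary points of two distinct constraint strips — most acutely when $\overline{V_n} \cap \overline{V_{n'}} \ne \emptyset$, where naive matching would demand the generically false identity $\psi_k^n = \psi_k^{n'}$. The remedy is to absorb the mismatch in short transition collars placed inside each prescription strip; one must then argue that the resulting perturbations of the iterates for those $s_0$ lying in a collar do not shift or shrink the accumulation set $p\,\omega_n(r_0)$. Here the uniform bound $\sup_n \|\psi_k^n - \mathrm{id}\|_\infty \to 0$ is pivotal, together with a sufficiently rapid shrinking of collar widths level by level, so that the $f_{01}$-orbit of any given $s_0 \in V_n$ eventually exits every collar; the same uniform decay yields continuity of $h_s$ at $s = \pm 1$. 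Finally, showing that the realized limit sets equal the prescribed $\omega_n(r)$ and $\alpha_j(r)$ \emph{exactly} (and not merely up to containment) relies on the density of $\{q_k\}$ in $[0,1]$.
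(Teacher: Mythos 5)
Your overall architecture --- a skew product $f(r,s)=(h_s(r),f_{01}(s))$ over $f_{01}$, with fiber homeomorphisms designed so that the first coordinates of the orbit accumulate exactly on the prescribed interval, followed by a continuity-in-$s$ repair --- is the same as the paper's, but there is a concrete gap at the very first step: $\widetilde\phi_k^n$ need not be a homeomorphism of $J$. The hypotheses only require $\mbox{\large$\omega$}_n$ to be increasing and endpoint preserving in the weak sense of Definition \ref{increasing}, so the endpoint functions $a_n,b_n$ are monotone but possibly discontinuous. Take $\mbox{\large$\omega$}_n(r)=J_1$ for all $r\in(-1,1)$, with $\mbox{\large$\omega$}_n(-1)=\{v_1\}$ and $\mbox{\large$\omega$}_n(1)=\{v_2\}$: this is increasing and endpoint preserving, $a_n\equiv-1$ and $b_n\equiv 1$ on $(-1,1)$ with jumps at $\pm1$, and your formula gives $\widetilde\phi_k^n(r)=(1-\epsilon_k)(2q_k-1)+\epsilon_k r$ on $(-1,1)$, which crushes $(-1,1)$ into an interval of length $2\epsilon_k$ while fixing $\pm1$ --- strictly increasing but neither continuous nor surjective, hence not a homeomorphism, so $\psi_k^n$ is undefined. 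Any interior jump of $a_n$ or $b_n$ causes the same failure. The paper avoids exactly this by prescribing orbit positions only on a countable dense set $R$ chosen to contain all discontinuity points of the $\xi_{nj}$, extending between consecutive prescribed points by linear interpolation (which \emph{is} a homeomorphism because the prescribed values are strictly ordered), and then recovering the limit sets at points of $J\setminus R$ by a squeeze argument that uses the automatic monotonicity of $\mbox{\large$\omega$}_{sf}$ from Lemma \ref{rising-prop2}.

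The second gap is the one you flag yourself but do not close, and it is worse than you suggest because your prescription is not self-correcting. A point $s_0\in V_n$ at, or converging to, the boundary of $V_n$ can lie in a transition collar at every level, so the telescoping identity $r_k=\widetilde\phi_k^n(r_0)$ degrades to $r_k=\widetilde\phi_k^n(\rho_k)$ for a drifting effective initial condition $\rho_k$; even if $\rho_k\to\rho$, the realized accumulation set is $[a_n(\rho),b_n(\rho)]\times\{1\}$, and since $a_n,b_n$ may be discontinuous this can differ from $\mbox{\large$\omega$}_n(r_0)$ no matter how close $\rho$ is to $r_0$. The paper's block construction sidesteps this: the closed sets $V_{nk}\uparrow V_n$ eventually capture any given $s\in V_n$, after which the orbit of each $(r,s)\in R_k\times V_{nk}$ is \emph{exactly} prescribed by $(\ref{eq:2.6})$--$(\ref{eq:2.7})$, and each block moves the iterate a definite fraction of the way toward $\xi_{n\lambda_k}(r)$ with $\lambda_k$ alternating between the two endpoints, which regenerates the whole interval $[\xi_{n1}(r),\xi_{n2}(r)]$ regardless of where earlier blocks left the point. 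To complete your argument you would need an analogous eventual exactness, or a uniform error estimate that survives the discontinuities of $a_n$ and $b_n$; neither is supplied.
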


\begin{rem} \ Before  the  proof of  Theorem \ref{main1'}\,,\ \,we make a
survey of the differences between \ $\omega\ ,\ \omega_f\ ,\ \mbox{\large$\omega$}_{sf}$ \ and \ $
\mbox{\large$\omega$}_n$\ .\ \ Let\ \ ${\mathcal F}$\ \ be the family  of all normally rising
homeomorphisms from \,$J^{\,2}$ \,to \,$J^{\,2}$\,.\ \ If we consider only the space\
\,$J^{\,2}$ \,and the family \ ${\mathcal F}$\ , \ then \

(1) \ $\omega$ \ is a map from \,$J^{\,2}\times{\mathcal F}$ \,to the family \ $\mathcal A$ \ of some
subsets of $J_1$\;; \

(2) \ $\omega_f$ \ is a map from \,$J^{\,2}$ \,to \,$\mathcal A$ , with a given \ $f\in{\mathcal
F}$ \ as a parameter ; \

(3) \ $\mbox{\large$\omega$}_{sf}$ \ is a map from \,$J$ \,to \ $\mathcal A$ , \ with a given
\ $s\in J$ \ and an \ $f\in{\mathcal F}$ \ as parameters\;; \

(4) \ $\mbox{\large$\omega$}_n$ \ is a map from \,$J$ \,to \,$\mathcal A$ , \ with a given \
$n\in\mathbb N$ \ as a parameter ; \

(5) \ The definition of\ \,$\omega$\ \,depends on the definition of\ \,$\omega$-limit sets of
orbits of maps\,, \,and \,$\omega_f$ \,can be regard as the restriction of \ $\omega$ \ to\,\
$J^{\,2}\times\{f\}$ ( really\,, \,it should be \ $\omega_f=\,\omega\,{\bf i}_f$,\ \ that is the
composition of an imbedding \ ${\bf i}_f:J^{\,2}\rightarrow J^{\,2}\times{\mathcal F}$ \ and \ $\omega$ )
.\ \ $\mbox{\large$\omega$}_{sf}$ \ can be again regard as the restriction of \ $\omega_f$\ \
to \ $J_s$ ( really\,, \,it should be \ $\mbox{\large$\omega$}_{sf}=\,\omega_f\,{\bf i}_{\:\!
s}$\ ,\ \ that is the composition of an imbedding \ ${\bf i}_{\:\!s}:J\rightarrow J^{\,2}$\ \
and \ $\omega_f$ )\ .\ \,Hence\,,\ \,the definitions of\ \ $\omega \ ,\ \omega_f$\ \ and $\mbox{\
\large$\omega$}_{sf}$ \ depend on orbits of maps\;. \ However\,, \ $\mbox{\large$\omega$}_n$\
\,is only a pure map , \,of which the definition is directly assigning a set $\mbox{\
\large$\omega$}_n(r)\,\in\mathcal A$\ \ for each\ \ $r\in J$\ ,\ \ which  does not depend on
any \ $f\in\,\mathcal F$ .

\vspace{1mm}The differences between \ $\alpha\ ,\ \alpha_f\ ,\ \mbox{\large$\alpha$}_{sf}$ \ and \
$\mbox{\large$\alpha$}_n$\ \ are analogous\,.

\end{rem}

Now we begin the proof.\ \ For $k\in\mathbb N$, write $\mathbb N_k=\{1,2,\cdots,k\}$.

\begin{proof}
We may consider only the case that \ $\mathbb N
\,'\,=\;\mathbb N$\ \ since the case that\ \ $\mathbb N\,'$\ \;is a finite  subset of\ \ $\mathbb N$\ \ is
similar and is simpler.
\medskip

If \ \,$1/2\,\notin\,\bigcup_{\,n=1}^{\;\infty}V_n$\ ,\ \ then we can add
the one-point-set \ $\{1/2\}$ \ to the family \,$\mathcal{V}$\;.\ \ If \,there exist \;$n
\in\mathbb N$\ \ and \;$s\in(\:0\:,\,1/2\:)$\ \ such that \ $[\:s\:,\,1/2\:]\,\subset\,V_n$\;,\ \
then we can divide \,$V_n$ \;into two connected sets \;$\{1/2\}$\ \;and \;$V_n\;-\,\{1/2\}$ .\ \;Therefore, \;we may assume that \;$\{1/2\}\,\in\,\mathcal{V}$\;, \ and \ $V_1\;\!=\,\{1/2\}$\;.
\medskip

For any \ $n\;,\;k\,\in\,\mathbb N$\ , \ write                      \vspace{2mm}
$$
V_{nk}\;=\;\left\{\begin{array}{ll} V_n\ ,
$$
&\vspace{-0.3mm}\mbox{\ \ \ if \ } V_n  \mbox{\ \ is \,a \,one-point-set}          \\
& \vspace{2.7mm}\hspace{23.5mm}              \mbox{ or \,a \,closed \,interval}\ ; \\
                             \big[\,a\;,\,(a+k\:\!b)/(k+1)\,\big]\ ,
& \vspace{2mm} \mbox{\ \ \ if \ } V_n\; =\;[\,a\;,\,b\,) \mbox{ \ for some \ }a\,<\,b
\ ;   \\                     \big[\,(k\:\!a+b)/(k+1)\;,\;b\,\big]\ ,
& \vspace{2mm} \mbox{\ \ \ if \ } V_n\;=\;(\,a\;,\,b\,] \mbox{ \ for some \ } a\,<\,b
\ ;   \\                     \big[\,(k\:\!a+b)/(k+1)\;,\,(a+k\:\!b)/(k+1)\,\big]\ ,
& \vspace{0mm} \mbox{\ \ \ if \ }V_n\;=\;(\,a\;,\,b\,)\mbox{ \ for some \ }a\,<\,b\;.
\end{array}\right.                                                       \vspace{3mm}
$$
Then \ $V_{n1}\,\subset\,V_{n2}\,\subset\,V_{n3}\,\subset\,\cdots\,\subset\,V_n$\ ,\ \ \ $\bigcup_{\,k\,=
\,1}^{\;\infty}V_{nk}\,=\,V_n$\ ,\ \ \ and \,for \,every \,fixed\ \ $k\,\in\,\mathbb N$\;, \
$\{\:\!V_{nk}\:\!:\;\!n\;\!\in\;\!\mathbb N\,\}$ \;\!is \;\!a \;\!family \;\!of\ \,pairwise\
\,disjoint\,\ nonempty \,connected \,closed \,subsets \,of \ $I_1$\;.

\medskip
For each\ \,$n\,\in\,\mathbb N$\;,\ \ since\ \ $\mbox{\large$\omega$}_n\,:\,J\,\rightarrow\;
\mathcal{A}$ \ is increasing and endpoint preserving , \,there exist increasing functions
\ $\xi_{\,n1}\,:\,J\,\rightarrow\;J$ \ and \ $\xi_{\,n2}\,:\,J\,\rightarrow\;J$ \ such that\,,\ \,for
any \ $r\in J$\ , \ it holds that
\begin{equation}\label{eq:2.2}
\xi_{\,n1}(r)\,\leq\,\xi_{\,n2}(r)\ , \hspace{8mm} {\large\omega}_n(
r)\,=\;\big[\;\xi_{\,n1}(r)\;,\;\xi_{\,n2}(r)\;\big]\,\times\,\{1\}\ ,
\end{equation}
and \,\ $\xi_{\,n1}(-1)\,=\,\xi_{\,n2}(-1)\,=\,-1$ , \,\ $\xi_{\,n1}(
1)\,=\,\xi_{\,n2}(1)\,=\,1$ . \ For \,$j\,=\,1\,,\,2$ , \ let \ $Y_{nj}$ \ be the set
of all discontinuous  points of\ \,$\xi_{\,nj}$\ .\ \ Since  the set of discontinuous
points of an increasing function is countable\,,\ \,there  exists a  countable  dense
subset \ $R\,=\,\{\,r_{-\,1}\;,\,r_0\;,\,r_1\:,\,r_2\:,\,r_3\:,\,\cdots\,\}$ \ of \,$
J$\ \,such that \;\,$r_{-\,1}=\,-1\;,\ \ r_0\,=\,1$\;, \,\ and \,\ $\bigcup_{\,n\,=\,
1\,}^{\;\infty}(\,Y_{n1}\,\cup\,Y_{n2\,})\,\subset\,R$\;.
\medskip

For each \ $k\;\in\;\mathbb{N}$ ,\ \ \ write                   \vspace{-2mm}
$$
R_k\;=\;\,\{\,r_{-\,1}\;,\,r_0\;,\,r_1\;,\,r_2\;,\,\cdots\;,\,r_k\,\}\ , \hspace{8mm}
\mbox{and\ \hspace{6mm} $V\;=\;\bigcup_{\,n\,=\;\!1}^{\;\infty}V_n$}\ .     \vspace{-2mm}
$$
Then we have \begin{equation}                                                           \vspace{2mm}
  \bigcup_{\,k\,=\;\!1}^{\;\infty}\big(\:R_k\times(\,V_{1k}\,\cup\,V_{2k}\,\cup
\,\cdots\,\cup\,V_{kk}\,)\,\big)\ =\,\ R\,\times V\ \subset\;\,D_1\,-\,J_0\;\,.
\end{equation}
For\ \ any\ \ $j\;,\;m\;\in\;\mathbb{Z}$\ \ \,with \ $j\;\leq\;m$\ ,\ \ write
$$
\mbox{$D_j^{\,m}\;=\;\,\bigcup_{\;i\,=\,j\,}^{\;m}\,D_i\ \,,\ \ \ \ \ D_j^{\,\infty}\;=\;
\,\bigcup_{\;i\,=\,j\,}^{\;\infty}\,D_i\ \,,\ \ \ \ \mbox{and}\ \ \ \ \ D_{-\,\infty}^{\;m}\;
=\;\,\bigcup_{\;i\,=\,-\,\infty\,}^{\;m}\,D_i\ \,.$}                         \vspace{1mm}
$$
Then \ $D_j^{\,m}\,=\ J\times\;\![\;t_{j-1}\,,\,t_m\,]$ , \,\ $D_j^{\,\infty}\,=\ J\times\;\![\;t
_{j-1}\,,\,1\,)$ , \,\ \vspace{1mm} $D_{-\,\infty}^{\;m}\,=\ J\times(\;-1\,,\,t_m\,]$\ ,\ \,\
and \ the \ closure \ \ \,$\overline{\;\!\!D_j^{\,\infty}\!}\ \,=\;\,D_j^{\,\infty}\,\cup\;J_1$\ ,\
\ \ $\overline{\;\!\!D_{-\,\infty}^{\;m}\;\!\!\!}\ \,=\ D_{-\,\infty}^{\;m}\,\cup\;J_{-\,1}$\ \,.
\ (See Figure 2.1).

\vspace{3mm}
In order to construct the homeomorphism $f:J^{\,2} \to J^{\,2}$ mentioned
in Theorem \ref{main1'}\,, \;we                                                   \vspace{3mm}

\ {\bf First},\ \ \ \ put \ \ \ $f_0\ =\ f\ |\;D_0\ =\;\,f_{02}\ |\ D_0\ :\ D_0\ \to\;
\,D_1$ .
\medskip

{\bf Secondly} ,\ \ assume  that ,\ \ for  some \ $k\in\mathbb N$\ ,\ \ we  have
defined a homeomorphism
\begin{equation}\label{eq:2.4}
f_{\ref{eq:2.4}}\;=\ f\;|\,D_0^{\,k^{2}-\,1}\ :\ \ D_0^{\,k^{2}-\,
1}\;\,\to\ \ D_1^{\,k^{2}},
\end{equation}
which satisfy the following conditions : ( For  avoiding  that  there
are overmany subscripts\,, \,in the already explicit domain \,$D_0^{\,k^{2}-\,1}$ , \
we will use \,$f$ \,to replace \,$f_{\ref{eq:2.4}}$\;, \,although the entire \,$f:J_{\,2}\to\,
J_{\,2}$ \ has not yet been defined\,. \,Similarly here in after. )

\vspace{3mm}(C.1.$k$)\ \ \ $f(J_s)\;=\,f_{02}(J_s)$\ ,\ \ \ \ for \ any \ \,$J_s\;\subset\
D_0^{\,k^{2}-\,1}$ ;

\vspace{2mm}(C.2.$k$)\ \ \;\,$|\,pf(r,s)-r\,|\,<\,2/(2m+3)$\ ,\ \ \,for any \ $m\,\in
\,\mathbb N_{\,k}$ \ and any\ \ $(r,s)\,\in\,D_{(m-1)^{\:\!2}}^{\,m^{2}-1}$\ ;

\vspace{2mm}(C.3.$k$)\ \ \;\,$|\,pf(r,s)-r\,|\,<\,2/(2k+5)$\ , \ \ for any \ $(r,s)\,
\in\,D_{k^2-1}\,\cap\;D_{k^2}\,=\,J_{\:\!t_{\:\!k^2-1}}$ .
\medskip

We will extend the homeomorphism \,$f_{\ref{eq:2.4}}$ \,to a homeomorphism
\begin{equation}\label{eq:2.5}
f_{\ref{eq:2.5}}\;=\ f\;|\;D_0^{\,(k+1)^{\,2}-1}\ :\ \ D_0^{\,(k+1
)^{\,2}-1}\;\,\to\ \ D_1^{\,(k+1)^{\,2}}
\end{equation}
 as follows :

\medskip
{\bf Step 1.} \ \ For any \ \,$n\,\in\;\mathbb N_{\,k}$ \ \,and any \ \,$(r,s)\,
\in\,R_k\times V_{nk}$ , \ we define \vspace{0.5mm} \ $f\big(\;\!f^{\,i\,-1}f^{\,k^2-1\,}
(r,s)\;\!\big)$\ $=\;f^{\,i}\big(f^{\,k^2-1\,}(r,s)\;\!\big)$\ \ for\ \ $i\,=\,1\,,\,
\cdots\,,\,2k+1$ \ in the natural order by putting the ordinate
\begin{equation}\label{eq:2.6}
q\big(f\big(\;\!f^{\,i\,-1}f^{\,k^2-1\,}(r,s)\;\!\big)\big)
\,=\;\,q\big(\;\!f^{\,i}\big(\;\!f^{\,k^2 - 1}(r, s)\,\big)\;\!\big)\,
=\;\,q\big(\;\!f_{\;\!02}^{\,k^{2}+\,i\,-1}(r,s)\;\!\big),
\end{equation}
and putting the abscissa
\begin{equation}\label{eq:2.7}
p\;\!\big(f\;\!\big(\;\!f^{\,i\,-1}f^{\,k^2-1\,}(r,s)\;\!
\big)\;\!\big)\,=\;\,p\;\!\big(\;\!f^{\,i}\big(\;\!f^{\,k^2-1}(r, s)\,\big)\;\!\big)
\end{equation}
$$=\ p\;\!\big(\;\!f^{\,k^2-1}(r,s)\;\!\big)\,+\;i\,\big(\;
\xi_{\,n\;\!\lambda_k}(r)\; - \;p\;\!\big(\,f^{\,k^2-1}(r , s)\,\big)\,\big)\;\!\mbox{\bf
\large$/$}\:\!\big(2k+7\big),$$
where \ $\lambda_k\,=\,1$ \ \,if \,$k$ \,is odd\ , \ \,and \ $\lambda_k\,=\,2
$ \ \,if \,$k$ \,is even .\ Note that\;, \;for any \ $n\,\in\;\mathbb N_{\,k}$ \ and any
\,$(r,s)\,,\,(r\,'\!,s)\in R_k\times V_{nk}$ \ with \;$r<r\,'$, \;since
\ $\mbox{\large$\omega$}_n$\ \,is \,increasing\,, \,from \,(\ref{eq:2.7}) \,we get
$$p\;\!\big(\;\!f^{\,i}\big(\;\!f^{\,k^2-1}(r,s)\,\big)\;\!
\big)\,<\,p\;\!\big(\;\!f^{\,i}\big(\;\!f^{\,k^2-1}(r\,'\!,s)\,\big)\;\!\big)\ \ \ \
\ \,\mbox{for}\ \ i\,=\,1\,,\,\cdots\,,\,2k+1\,.
$$

\medskip{\bf Step 2.} \ \ In Step 1\,, \ for \ $i\,=\,1\,,\,\cdots\,,\,2k+1$ ,\ \
the\ \,set\ \ $f^{\,i\,-1}\big(\,f^{\,k^2-1}\big(\,R_k\,\times\,\bigcup_{\,n\;=\,1}^{\;k}
V_{nk}\,\big)\,\big)$  $\ \subset\ D_{k^2+i-1}\,-\;J_{\,t_{\,k^2+i-2}}$\ \ has been really
defined\,. \,Thus we can define
\begin{equation}\label{eq:2.8}
X_k\ =\ \bigcup_{\;i\;=\,1}^{\;2k+1}\,f^{\,i\,-1}\big(\,f^
{\,k^2-1}\big(\,R_k\,\times\,\bigcup_{\,n\;=\,1}^{\;k}V_{nk}\,\big)\,\big) .
\end{equation}
Note that \ $X_k\,\subset\,\bigcup_{\;i\;=\,1}^{\;2k+1}\big(\,D_{k^2+i-1}
\,-\;J_{\,t_{\,k^2+i-2}}\,\big)\; = \;D_{k^2}^{k^2+2k}\,-\;J_{\,t_{\,k^2-1}}$\ .\ \,\
Write \vspace{1mm} \ \,$S_k\,=\;q(X_k)$\;.\ \ \,Then \ \,$S_k\,=\;\{\,s\,\in\,J\,:\;J
_s\,\cap\,X_k\;\neq\;\,\emptyset\,\}$\ .\ \ Noting that \ $R_k$\ \ contains just\ \ $k+2$\ \
points\,, \ \,$R_k\times\bigcup_{\,n\;=\,1}^{\;k}V_{nk}$ \ \,contains just \,$k\;\!(k+2)$
\,connected components\,,\ \ and every connected component of \ $R_k\times\bigcup_{\,n\;=
\,1}^{\;k}V_{nk}$\ \ is a point or a vertical line segment in\ \ $D_1-J_0$\,,\ \,from
\,(\ref{eq:2.6})\;, \,(\ref{eq:2.7})\;, \ (\ref{eq:2.8}) \ and the conditions \,(C.1.$k$)\ \,we see that \,$X_k$
\,with \,$S_k$ \ has the following properties :

\medskip
(A)\ \ $X_k$\ \  contains just\ \ $k(k+2)(2k+1)$\ \
connected components\,, \ \,$S_k$ \ contains just \ $k(2k+1)$\ \ connected components
,\ \ and  every  connected  component of\ \ $X_k$\ \ is a point or an arc\,,\ \ every
connected component of \,$S_k$\ \ is a point or a closed interval ;
\medskip

(B)\ \ For any\ \,$s\,\in\,S_k$\ , \ \,$J_s\,\cap\,
X_k$\ \ contains just\ \ $k+2$ \ points\,. \ Specially\,,\ \ since \ $\{\,r_{-1}\,,\,
r_0\}\,=\,\{\,-1\,,\,1\}\,\subset\,R_k$\;, \ \ $(\,-1,s)$ \ and \ $(1,s)$ \ are two points
in \ $J_s\,\cap\,X_k$ ;
\medskip

(C)\ \ If \;$L$ \;is a connected component of \,$S_k$\ \ and \;is a closed interval\ ,\ \,then \ $(J\times L)\,\cap\,X_k$\ \ is the union of
just \ $k+2$ \ arcs\;,\ \,of which each arc is a connected component \,of \,$X_k$\;.\
\ Specially\,, \ $\{-1\}\times L$ \ and \ $\{1\}\times L$ \ are two connected components \,of
\,$X_k$ . \ Moreover\,, \ for any \ $s\in L$ \ and any connected component \,$A$ \,of
\,$X_k$\ \,in\ \ $(J\times L)\,\cap\,X_k$\;,\ \ $A\,\cap\,(J\times\{s\})$ \ contains just one
point\,.
\medskip

In Step 1\,, \,we actually have given the definition of \ $f\,|\,X_k$\ ,\
hence we actually have obtained the map
\begin{equation}\label{eq:2.9}
f_{\ref{eq:2.9}}\;=\ f\;|\;D_0^{\,k^2-1}\cup X_k\ :\ \,D_0^{\,k^2-
1}\cup X_k\;\,\to\,\,\ D_1^{\,(k+1)^2}\ .
\end{equation}
From \,(\ref{eq:2.6}) \,and \,(\ref{eq:2.7}) \,we see that the map\ \,$f_{\ref{eq:2.9}}$\ \,is a
continuous injection\,.\ \ From the properties of \,$X$ \,mentioned above we see that
the map \,$f_{\ref{eq:2.9}}$ \,can be uniquely extended to a continuous injection
\begin{equation}{\label{eq:2.10}}
f_{\ref{eq:2.10}}\;=\ f\;|\;D_0^{\,k^2-1}\,\cup\;(\,J\times S_k\,)\ :\
\,D_0^{\,k^2-1}\,\cup\;(\,J\times S_k\,)\,\;\to\ \ D_1^{\,(k+1)^{\,2}}
\end{equation}
such that\,,\ \ for any \,$s\,\in\,S_k$ \ and any connected component
\;$L$\ \;of \ $J_s\,-\;X_k$\ ,\ \ \,$f\;|\;L$\ \ is linear\,.\ \ Such an injection\,\
$f_{\ref{eq:2.10}}$\ \,will be called the \,{\it level linear extension} \,of \,$f_{\ref{eq:2.9}}$\;. \
Obviously\,, \,$f_{\ref{eq:2.10}}$ \,can also be uniquely extended to a homeomorphism
$$
f_{\ref{eq:2.5}}\;=\ f\;|\;D_0^{\,(k+1)^{\,2}-1}\ :\ \ D_0^{\,(k+1)^{\,2}-1}\;\,\to\ \ D_1^{\,(k+1)^{\,2}}
$$
such that\,,\ \,for any\ \ $r\,\in\,J$\ \ and any connected component
\,$L$\ \,of\ \ $\big(\,\{r\}\times J\,\big)\,\cap\;D_{k^2}^{\,(k+1)^2-1}\,-\,(\;\!J\times S_k
\:\!)$\ ,\ \ \ $f\;|\;L$ \ is linear\,. \ Such a homeomorphism \,$f_{\ref{eq:2.5}}$\ \,will be
called the {\it vertical linear extension} \,of the injection \,$f_{\ref{eq:2.10}}$ .
\medskip

Since the level linear extension  from \,$f_{\ref{eq:2.9}}$ \,to \,$f_{\ref{eq:2.10}}$ \,is
before the vertical linear  extension from \,$f_{\ref{eq:2.10}}$ \,to\ \,$f_{\ref{eq:2.5}}$\ ,\ \ by\,\
(\ref{eq:2.6})\ \,and \,(C.1.$k$)\ \,we see that\,,\ \ for the homeomorphism \,$f_{\ref{eq:2.5}}$\,,\ \
the condition\,\ (C\,.\,1\,.\,$k+1$) \,holds\,.
\medskip

Since \ $\mbox{\large$\omega$}_n$ \ is \,endpoint preserving , \ in \,(\ref{eq:2.7}) ,
\  \vspace{1.5mm}  if \ $r\,\in\;\partial J$ \ then \ $p\;\!\big(\,f^{\,k^2-1}(r,s)\,\big)
\;=\;\xi_{\,n\;\!\lambda_k}(r)\;=\;r$\ .\ \ If \ $r\in\stackrel{\ \circ}{J}$ ,\ \ then \ $p\;\!
\big(\,f^{\,k^2-1}(r,s)\,\big)\,\in\,\stackrel{\ \circ}{J}$ ,\ \ which with  \vspace{1.5mm}
\ $\xi_{\,n\;\!\lambda_k}(r)\,\in\,J$\ \ implies\ \ $\mbox{\bf\large$|$}\,\xi_{\,n\;\!\lambda
_k}(r)-p\;\!\big(\,f^{\,k^2-1}(r,s)\,\big)\mbox{\bf\large$|$} < 2$\ . \,Thus\,,\ \,no
matter whether \;$r\in\;\!\partial J$ \,or \;$r\in\stackrel{\ \circ}{J}$\,, \ we have
$$\mbox{\bf\large $|$}\,\xi_{\,n\;\!\lambda_k}(r)\,-\,p\;\!\big(\,f^{\,k
^2-1}(r,s)\,\big)\,\mbox{\bf\large$|$\,$/$}\,\big(2k+7\:\!\big)\,<\;2\,\mbox{\bf\large
$/$}(2k+7\:\!)\;=\ 2\,\mbox{\bf\large $/$}\big(\,2(\:\!k+1)+5\,\big),
$$
 which with \,(\ref{eq:2.7}) \,and \,(\ref{eq:2.8}) \,implies
\begin{equation}\label{eq:2.11}
 |\,pf(x)-p(x)\,|\ <\ 2\,\mbox{\bf\large$/$}\big(\,2(\:\!k+
1)+5\,\big) , \hspace{7mm}\mbox {for \ any} \ \ x\,\in\,X_k\ .
\end{equation}
Clearly\,, \ after the level linear extension\,,\ \ for \,any\ \ $x\,
\in\,J\times S_k$ , \ ({\ref{eq:2.11}}) \ still \,holds . \;Specially\,, \ noting \,$t_{\:\!(k+1)^2-
1}\in\,S_k$\,, \ we have \,$J_{\:\!t_{\:\!(\:\!k+1)^2-1}}\,\subset\;J\times S_k$\,. \,Thus\,,\
\,for the homeomorphism \,$f_{\ref{eq:2.5}}$ , \,the condition \,(C\,.\,3\,.\,$k+1$) \,holds .
\medskip

In addition , \ after the vertical linear extension ,\ \ for \,any \ $x\,
\in\,D_{k^2}^{\,(k+1)^2-1}$\,,\ \ by \,(C.3.$k$) \,and \,(\ref{eq:2.11}) \,( for all \ $x\,\in
\,J\times S_k$ ) \,we obtain
$$
|\,pf(x)-p(x)\,|\ <\ \,\max\,\big\{\;\!2/(2k+5)\ ,\;2\,\mbox{\bf\large$/$}\big(\,2(\:
\!k+1)+5\,\big)\;\!\big\}\ =\ \,2\,\mbox{\bf\large$/$}\big(\;\!2(\:\!k+1)+3\,\big)\ .
$$
This with\ \,(C.2.$k$)\ \,implies that\,,\ \,for the homeomorphism\ \,(\ref{eq:2.5})\,,\ \,the
condition \,(C\,.\,2\,.\,$k+1$) \,also holds .

\medskip
Therefore\,, \,by induction\,, \,we obtain a homeomorphism
\begin{equation}\label{eq:2.12}
f_{\ref{eq:2.12}}\;=\ f\;|\;D_0^{\,\infty}\ :\ \ D_0^{\,\infty}\ \rightarrow\ \,D_1^{\,\infty},
\end{equation}
 which satisfies the conditions\ \,(C.1.$k$)\;,\ \,(C.2.$k$)\ \ and\,\
(C.3.$k$)\ \ for all \ $k\,\in\,\mathbb N$ , \ and \,from these conditions \,we can directly
extend \,the \,homeomorphism \,$f_{\ref{eq:2.12}}$ \,to \,a \,homeomorphism
\begin{equation}\label{eq:2.13}
f_{\ref{eq:2.13}}\;=\ f\;|\ \overline{\;\!\!D_0^{\,\infty}\!}\ \;:\ \;\overline{\;
\!\!D_0^{\,\infty}\!}\ \ \to\ \ \overline{\;\!\!D_1^{\,\infty}\!}
\end{equation}
 by putting \ $f(x)\,=\,x$ \ for any \ $x\,\in\,J_1$ .
\medskip

As  mentioned above\,, \,in the domain \,$\overline{\;\!\!D_0^{\,\infty}\!}$ , \ we
will replace \,$f_{\ref{eq:2.13}}$\ \,by \,$f$\,, \,even if the definition of the entire\ \,$f
:J^{\,2}\rightarrow\,J^{\,2}$\ \ has  not yet  been given\,. \,Specially\,,\ \,for any \,$x\:
\!\in\,\overline{\;\!\!D_0^{\,\infty}\!}$\,, \ we can write \ $\omega(x,f)$ \ for \ $\omega(x\,,f_{\ref{eq:2.13}
})$\ , \ since \ $\omega(x,f)\,=\;\omega(x,f_{\ref{eq:2.13}})$ ,\ \ no matter how \ $f\;\mbox{\large$|
$}\ D_{-\,\infty}^{\,-1}\ :\;\,D_{-\,\infty}^{\,-1}\ \rightarrow\;\,D_{-\,\infty}^{\,0}$ \ is defined\,.

\medskip
{\bf Claim 2.5.1.}\ \ \ for \,any \ $n\,\in\;\mathbb N$ \ and \,any \,given \ $s
\in V_n$ ,\ \ it holds that \ $\mbox{\large$\omega$}_{sf}\;=\ \mbox{\large$\omega$}_n$ .

\vspace{3mm}{\bf Proof of Claim 2.5.1.} \ \ Consider any given \ $r\in R$ . \ Take an
integer \ $j\,\geq\,n$ \ such that \ $r\,\in\,R_j$ \ and \ $s\,\in\,V_{nj}$ . \ Then\,
$(r,s)\,\in\,R_k\times V_{nk}$ \,\ for any integer \ $k\,\geq\,j$\ .\ \ By \,(\ref{eq:2.6}) \,and\,
(\ref{eq:2.7}) \,we can easily verify that \ $\omega_f(r,s)\,=\;\big[\,\xi_{\,n1}(r)\;,\,\xi_{\,n2
}(r)\,\big]\,\times\,\{1\}$ . \,This \,with \,(\ref{eq:2.1}) \,and \,(\ref{eq:2.2}) \,implies \,$\mbox{
\large$\omega$}_{sf}(r)\;=\;\,\mbox{\large$\omega$}_n(r)$ . \,Thus we have \ $\mbox{\large$\omega
$}_{sf}\,|\,R\;=\ \mbox{\large$\omega$}_n\,|\,R$ .
\medskip

Further\,, \,consider any given \ $t\in J-R$\ .\ \,Since \,$R$ \,contains
all discontinuous points of \,$\xi_{\,n1}$ \ and \ $\xi_{\,n2}$ , \ it follows that\,
$t$ \,is a continuous point both of \ $\xi_{\,n1}$ \ and of \ $\xi_{\,n2}$ .\ \ Since
\,$R$ \,is a dense subset of \,$J$\,, \,\ $\partial J\subset R$\ , \ and since \ $\xi_{\,n1}$ \
and \ $\xi_{\,n2}$ \ are increasing , \ there exist \ \ $t_i\,\in\,(\,t\;,\;t+1/\,i\;
]\,\cap\,R$ \ \ and \ \ $\tau_i\,\in\,[\;t-1/\,i\;,\;t\,)\,\cap\,R$ \ \ for each \ $i
\,\in\,\mathbb N$\ \ such that
\begin{equation}\label{eq:2.14}
\xi_{\,n1}(t)-1/\,i\ < \ \xi_{\,n1}(\tau_i)\ \le\ \xi_{\,
n1}(t)\ \le\ \xi_{\,n1}(t_i)\ <\ \xi_{\,n1}(t)+1/\,i
\end{equation}
and
\begin{equation}\label{eq:2.15}
\xi_{\,n2}(t)-1/\,i\ < \ \xi_{\,n2}(\tau_i)\ \le\ \xi_{\,
n2}(t)\ \le\ \xi_{\,n2}(t_i)\ <\ \xi_{\,n2}(t)+1/\,i\ .
\end{equation}
On the other hand\ ,\ \,from \,Lemma \ref{rising-prop2}\ \,we know that there exist
increasing functions \ $\psi_{\,n1}\,:\,J\,\rightarrow\;J$\ \ and \ $\psi_{\,n2}\,:\,J\,\rightarrow\;J$ \
such that\,,\ \ \,for \ any \ $r\,\in\,J$\,, \ \,it holds that
\begin{equation}\label{eq:2.16}
\psi_{\,n1}(r)\,\le\,\psi_{\,n2}(r)   \hspace{6mm}   \mbox{and}    \hspace{6mm}
\mbox{\large$\omega$}_{sf}(r)\, =\;\big[\,\psi_{\,n1}(r)\;,\,\psi_{\,n2}(r)\,\big]\,\times\,
\{1\}\ .
\end{equation}
Noting\ \,\ $\mbox{\large$\omega$}_{sf}\,|\,R\; = \ \mbox{\large$\omega$}_n\,
|\,R$\ ,\ \ from \ (\ref{eq:2.16}) \ and \ (\ref{eq:2.2}) \ we get\,,\ \ \,for \ any \ $i\in\mathbb N$\,,
$$
\psi_{\,n1}(\tau_i)\ =\ \xi_{\,n1}(\tau_i)\ ,\ \,\ \psi_{\,n1}(t_i)\ =\ \xi_{\,n1}(t_i) \
,\ \,\ \psi_{\,n2}(\tau_i)\ =\ \xi_{\,n2}(\tau_i)\ ,\ \,\ \psi_{\,n2}(t_i)\ =\ \xi_{\,n2}
(t_i)\ ,
$$
which with \ \,$\psi_{\,n1}(\tau_i)\,\leq\,\psi_{\,n1}(t)\,\leq\,\psi_{\,n1}(t_i)$ , \ \,$\psi_
{\,n2}(\tau_i)\,\leq\,\psi_{\,n2}(t)\,\leq\,\psi_{\,n2}(t_i)$ \ \,and \ \,(\ref{eq:2.14}) , \ (\ref{eq:2.15})
\ \,imply \ \,$\psi_{\,n1}(t)\ =\ \xi_{\,n1}(t)$\ \ \,and \ \,$\psi_{\,n2}(t)\ =\ \xi_{\,
n2}(t)$ . \ Thus we have \ $\mbox{\large$\omega$}_{sf}(t)\;=\;\,\mbox{\large$\omega$}_n(t)$ \
and hence \ $\mbox{\large$\omega$}_{sf}\,=\ \mbox{\large$\omega$}_n$\ . \ Claim 2.5.1 \,is\,\
proved\,.
\medskip

\setlength{\unitlength}{1mm}
\begin{picture}(100,100)
\put(0,50){\vector(1,0){100}}
\put(50,5){\vector(0,1){90}}
\put(10,10){\line(1,0){80}}
\put(10,10){\line(0,1){80}}
\put(10,90){\line(1,0){80}}
\put(90,10){\line(0,1){80}}
\put(10,30){\line(1,0){80}}
\put(10,70){\line(1,0){80}}
\put(10,20){\line(1,0){80}}
\put(10,80){\line(1,0){80}}
\put(10,15){\line(1,0){80}}
\put(10,85){\line(1,0){80}}
\put(8,92){$v_1$}
\put(91,91){$v_2$}
\put(8,7){$v_3$}
\put(91,7){$v_4$}
\put(10,10){\circle*1}
\put(10,90){\circle*1}
\put(90,10){\circle*1}
\put(90,90){\circle*1}
\put(91,46){\scriptsize$1$}
\put(5,46){\scriptsize$-1$}
\put(47,91){\scriptsize$1$}
\put(45,6){\scriptsize$-1$}
\put(47,46){\scriptsize$0$}
\put(45,67){\scriptsize{$1/2$}}
\put(45,77){\scriptsize{$3/4$}}
\put(42,27){\scriptsize{$-1/2$}}
\put(42,17){\scriptsize{$-3/4$}}
\put(26,60){\small$D_1$}
\put(26,40){\small$D_0$}
\put(26,24){\small$D_{-1}$}
\put(26,16){\small$D_{-2}$}
\put(26,75){\small$D_2$}
\put(26,81){\small$D_3$}
\put(28,86){\circle*0.9}
\put(28,87){\circle*0.9}
\put(28,88){\circle*0.9}
\put(28,14){\circle*0.9}
\put(28,13){\circle*0.9}
\put(28,12){\circle*0.9}
\put(50,60){\circle*1}
\put(47,59){s}
\put(50.5,59){\Large$\}$}
\put(54,59){$V_n$}
\put(80,59){\circle*1}
\put(75,55){$(r_j,s)$}
\put(70,75){\circle*1}
\put(72,73){$f(r_j,s)$}
\put(60,82){\circle*1}
\put(62,81){\small$f^2(r_j,s)$}
\put(80,50){\circle*1}
\put(79,46){$r_j$}
\put(30,90){\circle*1}
\put(60,90){\circle*1}
\put(22,93){\tiny{$(\xi_{n1}(r_j),1)$}}
\put(55,93){\tiny{$(\xi_{n2}(r_j),1)$}}
\put(40,87){.....}
\put(42,-2){Figure 2.1}
\end{picture}

\medskip
Similarly\;, \ we can construct a homeomorphism
\begin{equation}\varphi\ :\ \;\overline{\;\!\!D_{-\,\infty}^{\,1}\!\!\!\!}\ \ \ \,\rightarrow\ \
\overline{\;\!\!D_{-\,\infty}^{\,0}\!\!\!\!}\
\end{equation}
such that \

\vspace{2mm}(1) \ \ $\varphi\;|\ D_1\,\cup\,J_{-\,1}\;=\;f_{02}^{-1}\;|\ D_1\,\cup\,J_{-\,
1}\;:\ D_1\,\cup\,J_{-\,1}\;\,\rightarrow\ D_0\,\cup\,J_{-\,1}$\, ;

\vspace{2mm}(2)\ \ \ $\varphi\;\!(J_s)\;=\;f_{02}^{-1}(J_s)$ ,\ \ \ for \,any \ $s\;\in\;[
\;-1\;,\,1/\;\!2\;]$\, ;

\vspace{2mm}(3)\ \ \ $\omega_\varphi(r,s)\;=\;\mbox{\large$\alpha$}_{j\,}(r)$ ,\ \ \ for \,any
\ $r\,\in\,J$ , \ any $j\,\in\,\mathbb N\,''$ , \ and \,any \ $s\,\in\,W_j$\, .

\vspace{2mm}\noindent Define \ $f:J^{\,2}\rightarrow\,J^{\,2}$ \ \ by \ \
$$
f\;|\ \,\overline{\;\!\!D_0^{\,\infty}\!}\;\ =\;\,f_{\ref{eq:2.13}}\ \,,     \hspace{9mm}     \mbox{and}
\hspace{9mm}    f\;|\ \,\overline{\;\!\!D_{-\,\infty}^{\,-1}\!\!}\ \ =\;\,\varphi^{\,-\,1}\;|\ \,\overline
{\;\!\!D_{-\,\infty}^{\,-1}\!\!}\ \ \,.
$$
Then\ \,$f$\ \,is a homeomorphism which satisfies the conditions mentioned in Theorem
\ref{main1'}\;, \;and the proof is complete. \end{proof}

\vspace{5mm}

\section{Bounded and unbounded orbits of homeomorphisms on $\mathbb R^2$}

In this section, we will use  Theorem \ref{main1'} to construct a homeomorphism on the plane which
illustrates an interesting phenomenon: points of bounded orbits can surround points of divergent orbits.

\medskip
Let\ \ $\Psi:J^{\,2}\,\rightarrow\,J^{\,2}$\ \ be \,the\,\ {\it level reflect}\ \ and\ \ let\ \
$\Psi_v:J^{\,2}\,\rightarrow\,J^{\,2}$\ \ be \,the \,{\it vertical reflect} \ defined \,by
\begin{equation}\label{eq:3.1}
\Psi(r,s)\,=\,(\:\!-\,r,s) \hspace{5mm} \mbox{and} \hspace{5mm} \Psi
_v(\vspace{1mm}r,s)\,=\,(r,\,-s) \hspace{5mm} \mbox{for \ any} \ (r,s)\in\,J^{\,2}\,.
\end{equation}

\begin{lem}\label{6-points} Let \ $K\,=\,[\,1/3\,,\,1/2\,]$ , \ and let the rectangle\ \
$F\,=\;[\,-1/2\,,\,1/2\,]\times K$\;. \ Write \ $L_1\,=\,\{-1/2\}\times\stackrel{\ \circ}{K}$ , \ and
\ $L_2\,=\,\partial F-\,L_1$\ . \ Let \ \,$u_1\,,\ \cdots\,,\ u_6$
\ \ be \ six \ points \ in \ \,$J\,\times\;\partial J$ \ \ with
$$
u_1=(\,-1/2\,,\,1)\;,\ \ u_2=(\,0\,,\,1)\;,\ \ u_3=(\,1/2\,,\,1)\;,\ \ \,{\rm and}\ \
\ u_{\,i\,+\,3}=\;\!\Psi_v(u_i)\ \ \ {\rm for}\ \ \,i\,\in\,\mathbb N_{\:\!3}
$$
( see \,Fig.\;3.1 \,below)\;. \ Then there exists a \ normally rising homeomorphism \ $f:J^{\,2}\to J^{\,2}$ \ such that

(1) \ \ \ $\omega(x,f)\,=\,\{u_1\}$\ \ \ and\ \ \ $\alpha(x,f)\,=\,\{u_4\}$\ \ \ for any\ \ \
$x\in L_1$\ ,

(2) \ \ \ $\omega(y,f)\,=\,\{u_2\}$\ \ \ and\ \ \ $\alpha(y,f)\,=\,\{u_5\}$\ \ \ for any\ \ \
$y\in \stackrel{\ \circ}{F}$\ ,\ \ \ \ and

(3) \ \ \ $\omega(z,f)\,=\,\{u_3\}$\ \ \ and\ \ \ $\alpha(z,f)\,=\,\{u_6\}$\ \ \ for any\ \ \
$z\in L_2$\;.
\end{lem}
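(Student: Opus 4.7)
The plan is to deduce this lemma as a direct application of Theorem \ref{main1'} by prescribing carefully the families $\mathcal{V}, \mathcal{W}$ and the limit-set functions $\omega_n, \alpha_j$ so that each point of $F$ lands in a horizontal strip on which the prescribed $\omega$- and $\alpha$-limit behaviour matches the six targets.

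Concretely, since the $s$-coordinate of every point of $F$ lies in $K=[1/3,1/2]\subset I_1$, I would take $\mathbb N'=\mathbb N''=\mathbb N_{\:\!3}$ together with
\[
V_1=W_1=\{1/3\},\qquad V_2=W_2=(1/3,1/2),\qquad V_3=W_3=\{1/2\},
\]
which are pairwise disjoint nonempty connected subsets of $I_1$, and exactly cover the $s$-range of $F$. I would then define $\omega_1,\omega_3:J\to\mathcal{A}$ so that
\[
\omega_1(r)=\omega_3(r)=\{(1/2,1)\}=\{u_3\}\qquad\text{for all }r\in[-1/2,1/2],
\]
and extend outside $[-1/2,1/2]$ by any increasing endpoint-preserving formula (for instance $\omega_1(r)=\omega_3(r)=\{(r,1)\}$ on $[1/2,1]$ and $\{(2r+1,1)\}$ on $[-1,-1/2]$). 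For $\omega_2$ I would prescribe the jump structure
\[
\omega_2(-1/2)=\{u_1\},\qquad \omega_2(r)=\{u_2\}\ \ (r\in(-1/2,1/2)),\qquad \omega_2(1/2)=\{u_3\},
\]
and again extend endpoint-preservingly outside $[-1/2,1/2]$. The maps $\alpha_1,\alpha_2,\alpha_3:J\to\mathcal{A}'$ are defined in the same pattern but with $u_3,u_1,u_2,u_3$ replaced by their vertical reflections $u_6,u_4,u_5,u_6$.

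The verification that each $\omega_n$ and each $\alpha_j$ is \emph{increasing} and \emph{endpoint preserving} in the sense of Definition \ref{increasing} is straightforward: on $[-1/2,1/2]$ the jumps of $\min p$ and $\max p$ go $-1/2\to-1/2\to0\to 1/2\to 1/2$ (for $\omega_2$) or stay constant equal to $1/2$ (for $\omega_1,\omega_3$), hence are non-decreasing, and the chosen extensions on $[-1,-1/2]$ and $[1/2,1]$ interpolate monotonically to the required endpoint values $\{v_1\},\{v_2\}$ (resp.\ $\{v_3\},\{v_4\}$). Theorem \ref{main1'} then provides a normally rising homeomorphism $f:J^2\to J^2$ with $\omega_{sf}=\omega_n$ on $V_n$ and $\alpha_{sf}=\alpha_j$ on $W_j$.

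Finally, I would conclude by a case-check against the three conclusions of the lemma. A point $x=(-1/2,s)\in L_1$ has $s\in(1/3,1/2)=V_2=W_2$, so $\omega_f(x)=\omega_2(-1/2)=\{u_1\}$ and $\alpha_f(x)=\alpha_2(-1/2)=\{u_4\}$; a point $y=(r,s)\in\stackrel{\circ}{F}$ has $s\in V_2=W_2$ and $r\in(-1/2,1/2)$, so $\omega_f(y)=\{u_2\}$ and $\alpha_f(y)=\{u_5\}$; and every $z\in L_2$ falls into one of the three strips $s=1/3$, $s\in(1/3,1/2)$ with $r=1/2$, or $s=1/2$, and in each case the chosen $\omega_n$ and $\alpha_j$ give value $\{u_3\}$ and $\{u_6\}$ respectively. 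The only mildly delicate point—really the only thing to get right—is arranging the jump discontinuities of $\omega_2$ and $\alpha_2$ at $r=\pm 1/2$ to be compatible with monotonicity of $\min p$ and $\max p$; once this is set up, everything else is immediate from Theorem \ref{main1'}.
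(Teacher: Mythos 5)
Your proposal is correct and follows essentially the same route as the paper: the authors likewise take $V_1=\{1/3\}$, $V_2=\stackrel{\ \circ}{K}$, $V_3=\{1/2\}$ (with the same sets serving for the $\alpha$-data), prescribe $\mbox{\large$\omega$}_1=\mbox{\large$\omega$}_3\equiv\{u_3\}$ and the jump structure $\{u_1\},\{u_2\},\{u_3\}$ for $\mbox{\large$\omega$}_2$ on $[-1/2,1/2]$ (and the vertically reflected data for the $\mbox{\large$\alpha$}_i$), and invoke Theorem \ref{main1'}. Your additional verifications of monotonicity and the case-check over $L_1$, $\stackrel{\ \circ}{F}$, $L_2$ are exactly the details the paper leaves implicit; just make sure your illustrative extension formula is applied only on $[-1,-1/2)$ so that $\mbox{\large$\omega$}_1(-1/2)=\{u_3\}$ is retained, as the corner points of $\partial F$ belong to $L_2$.
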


\begin{proof}
Let \ ${\mathcal{V}}=\{V_1\,, V_2\,, V_3\,\}$ \ with \ $V_1
=\{1/3\}\,, \ \ V_{2}=\stackrel{\circ}{K}\,, \ \ V_3=\{1/2\}$\,. \ Let \ $\{\,\mbox{\large$\omega$}_i: J\rightarrow
{\mathcal{A}}\,\}_{i\,=1}^{\,3}$ \ and \ $\{\mbox{\large$\alpha$}_i: J \rightarrow{\mathcal{A}}'\,\}_{i\,
=1}^{\,3}$ \ be endpoint preserving and increasing maps\,,\ \,which satisfy

\vspace{2mm}(a)\ \ \ $\mbox{\large$\omega$}_1(r)\;=\;\mbox{\large$\omega$}_3(r)\;=\;\{u_3\}$\
,\ \ for any \ $r\,\in\,[\,-1/2\,,\,1/2\,]$\ ;

\vspace{1mm}(b)\ \ \ $\mbox{\large$\omega$}_2(-1/2)\;=\;\{u_1\}\ ,\ \ \mbox{\large$\omega$}_2
(1/2)\;=\;\{u_3\}$\ ,\ \ and\ \ $\mbox{\large$\omega$}_2(r)\;=\;\{u_2\}$ \ for any \ $r\,
\in\,(\,-1/2\,,\,1/2\,)$\ ;

\vspace{2mm}(c)\ \ \ $\mbox{\large$\alpha$}_1(r)\;=\;\mbox{\large$\alpha$}_3(r)\;=\;\{u_6\}$\
,\ \ for any \ $r\,\in\,[\,-1/2\,,\,1/2\,]$\ ;

\vspace{1mm}(d)\ \ \ $\mbox{\large$\alpha$}_2(-1/2)\;=\;\{u_4\}\ ,\ \ \mbox{\large$\alpha$}_2
(1/2)\;=\;\{u_6\}$\ ,\ \ and\ \ $\mbox{\large$\alpha$}_2(r)\;=\;\{u_5\}$ \ for any \ $r\,
\in\,(\,-1/2\,,\,1/2\,)$\ .

\noindent Then by Theorem \ref{main1'}, there exists a normally rising homeomorphism\ \ $f:J^{
\,2}\rightarrow J^{\,2}$ \ such that \ $\mbox{\large$\omega$}_{sf}\,=\,\mbox{\large$\omega$}_i$ \ and
\ $\mbox{\large$\alpha$}_{sf}\,=\,\mbox{\large$\alpha$}_i$ \ for any \ $i \in\mathbb N_3$\ \ and
any \ $s\in V_i$\;. \ Such an \,$f$\ \ will satisfies the requirements.
\,The proof is complete\,.
\end{proof}

Let \ $X$ \ and \ $Y$ \ be topological spaces\,, \,and \ $\beta:X\,\rightarrow\,X$ \
and \ $\gamma:Y\,\rightarrow\,Y$ \ be continuos maps\,.\ \,If there exists a continuos surjection
(\,resp. a homeomorphism) \ $\eta:X\,\rightarrow\,Y$\ \ such that \ $\eta\beta\;=\;\gamma\eta$\ , \
then\ \ $\beta$\,\,\ and\ \ $\gamma$\ \ are said to be\ \,{\it topologically semi-conjugate}
(\,resp.\ \,{\it topologically conjugate})\ ,\ \,and\ \ $\eta$\ \ is called a\ \,{\it
topological semi-conjugacy} (\;resp. a {\it topological conjugacy})\,\ from \ $\beta$\ \
to \ $\gamma$ . \ The following lemma is well known\,, \,however\,, \,for convenience\,,\
\,we still give a short proof\,.

\begin{lem} \label{semi-conj} Let\ \ $\beta:X\,\rightarrow\,X$\ \ and \ $\gamma:Y\,\rightarrow\,Y$ \
with a topological semi-conjugacy \ $\eta:X\,\rightarrow\,Y$\ \ be as above\;. \ If \;both\ \
\,$X$\ \,and\ \ \,$Y$\ \,are compact metric spaces\;, \,then \

\vspace{1mm}$(1)$\ \ \ For \,any \ $x\;\in\;X$\,, \ the \ $\omega$-limit set \,\ $\omega\big(
\eta(x)\,,\,\gamma\big)\,=\,\;\eta\big(\omega(x\,,\,\beta)\big)\ ;$

\vspace{1mm}$(2)$ \ \ If \;both \ $\beta$ \ and \ $\gamma$ \,are \,homeomorphisms\,, \ then
, \ for \,any \ $x\;\in\;X$\,, \ the \,$\alpha$-limit set\ \ \,$\alpha\big(\eta(x)\,,\,\gamma\big)
\,=\,\;\eta\big(\alpha(x\,,\,\beta)\big)$\ .
\end{lem}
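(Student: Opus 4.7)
The plan is to prove the two set equalities by double inclusion, with the only nontrivial ingredient being compactness of $X$.

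For part $(1)$, I will first show $\eta(\omega(x,\beta))\subseteq\omega(\eta(x),\gamma)$. Given $y\in\omega(x,\beta)$, pick $n_i\to\infty$ with $\beta^{n_i}(x)\to y$. Iterating the semi-conjugacy identity gives $\eta\beta^{n}=\gamma^{n}\eta$ for every $n\in\mathbb{Z}_+$, and continuity of $\eta$ then yields $\gamma^{n_i}(\eta(x))=\eta(\beta^{n_i}(x))\to\eta(y)$, so $\eta(y)\in\omega(\eta(x),\gamma)$. This direction uses only continuity.

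The reverse inclusion $\omega(\eta(x),\gamma)\subseteq\eta(\omega(x,\beta))$ is where compactness comes in. Given $z\in\omega(\eta(x),\gamma)$, choose $n_i\to\infty$ with $\gamma^{n_i}(\eta(x))\to z$. Since $X$ is compact, the sequence $\beta^{n_i}(x)$ has a convergent subsequence $\beta^{n_{i_k}}(x)\to w$ for some $w\in X$, and by construction $w\in\omega(x,\beta)$. Applying $\eta$ and using $\eta\beta^{n_{i_k}}=\gamma^{n_{i_k}}\eta$ together with continuity of $\eta$ gives both $\eta(\beta^{n_{i_k}}(x))\to\eta(w)$ and $\eta(\beta^{n_{i_k}}(x))=\gamma^{n_{i_k}}(\eta(x))\to z$, hence $z=\eta(w)\in\eta(\omega(x,\beta))$.

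For part $(2)$, the natural move is to reduce to part $(1)$ applied to the inverses. When $\beta$ and $\gamma$ are homeomorphisms, composing $\eta\beta=\gamma\eta$ on both sides with $\gamma^{-1}$ and $\beta^{-1}$ yields $\eta\beta^{-1}=\gamma^{-1}\eta$, so $\eta$ is also a topological semi-conjugacy from $\beta^{-1}$ to $\gamma^{-1}$. Applying $(1)$ to this new pair gives $\omega(\eta(x),\gamma^{-1})=\eta(\omega(x,\beta^{-1}))$, which is exactly $\alpha(\eta(x),\gamma)=\eta(\alpha(x,\beta))$ by the definitions $\alpha_f=\omega_{f^{-1}}$ recalled in the introduction.

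The main obstacle, such as it is, is the reverse inclusion in $(1)$: without compactness the orbit $\{\beta^{n_i}(x)\}$ need not accumulate, and then a point of $\omega(\eta(x),\gamma)$ might fail to be the $\eta$-image of any $\omega$-limit point of $x$. Everything else is a direct manipulation of the intertwining relation together with continuity of $\eta$.
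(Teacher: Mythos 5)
Your proof is correct and follows essentially the same route as the paper's: the forward inclusion by continuity of $\eta$ applied along the sequence $\beta^{n_i}(x)$, the reverse inclusion by extracting a convergent subsequence of $\beta^{n_i}(x)$ using compactness of $X$, and part $(2)$ by observing that $\eta$ semi-conjugates $\beta^{-1}$ to $\gamma^{-1}$ and invoking part $(1)$. No gaps.
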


\begin{proof}\ \ $(1)$\ \,\ For any given \ $x\,\in\,X$\ , \ let \ $y\;=\;
\eta(x)$ . \ For any \ $n\,\in\,\mathbb N$ , \ write \ $x_n\;=\;\beta^{\;n}(x)$\;, \ and\ \
$y_n\;=\;\gamma^{\;n}(y)$ . \ If \,some point \ $w\,\in\,\omega(x\,,\,\beta)$\ , \ then there is
a sequence \ $n_1<n_2<\cdots$ \ in \ $\mathbb N$ \ such that \ \,$\lim_{\,i\,\rightarrow\,\infty
}\,x_{n_i}\;\!=\;w$ . \ By the continuity of \ $\eta$ , \ we have \ \,$\lim_{\,i\,\rightarrow
\,\infty}\,y_{n_i}=\;\eta(w)$\;, \ which means \ $\eta(w)\,\in\,\omega(y\,,\,\gamma)$ , \ and
hence \ $\eta\big(\omega(x\,,\,\beta)\big)\;\subset\;\omega\big(\eta(x)\,,\,\gamma\big)$ .
\vspace{2mm}Conversely\,, \,if \,some point \ $u\,\in\,\omega(y\,,\,\gamma)$\ ,\ \ then there
is a sequence \ $n_1<n_2<\cdots$\ \ in\ \ $\mathbb N$\ \ such that \ \,$\lim_{\,i\,\rightarrow\,
\infty}\,y_{n_i}\;\!=\;u$\ .\ \ Since \ $X$\ \ is compact\,,\ \ there is a point \ $w
\,\in\,X$\ \ and a subsequence\ \ $m_1<m_2<\cdots$\ \ of\ \,the sequence\ \ $n_1<n_2<
\cdots$\ \ such that \ \,$\lim_{\,i\,\rightarrow\,\infty}\,x_{m_i}\;\!=\;w$\;,\ \ which means
\ $w\,\in\,\omega(x\,,\,\beta)$\ \ and leads to \ \,$\lim_{\,i\,\rightarrow\,\infty}\,y_{m_i}\;\!=\;
\eta(w)\;=\;u$\;.\ \ Thus we have \ $\omega\big(\eta(x)\,,\,\gamma\big)\;\subset\;\eta\big(\omega(x\,,
\,\beta)\big)$ .

\vspace{2mm}$(2)$ \ \ If \;both \ $\beta$\ \ and \ $\gamma$\ \,are homeomorphisms\,, \ then
\ $\eta$\ \ is also a topological semi-conjugacy from \ $\beta^{\,-1}$ \ to \ $\gamma^{\,-1}
$\,,\ \ and from the conclusion (1) we get

\vspace{1mm}\hspace{16mm} $\alpha\big(\eta(x)\,,\,\gamma\big)\,=\ \omega\big(\eta(x)\,,\,\gamma^{\,-1
}\big)\,=\ \eta\big(\omega(x\,,\,\beta^{\,-1})\big)\,=\ \eta\big(\alpha(x\,,\,\beta)\big)$\ .
\end{proof}

 The following theorem is  well known\,, \,which is an  equivalent form of
the Sch\"{o}nflies theorem ( see e.g. \,\cite[p.72]{Mo77}\,)\,.

\begin{thm}\label{sch}
For any disks \ $E$ \ and \ $G$ \ in \ $\mathbb R^{\,
2}$\,,\ \ there exists a homeomorphism \ $\zeta:\mathbb R^{\,2}\rightarrow\mathbb R^{\,2}$\ \ such that\ \ $\zeta
(G)\,=\,E$\,.
\end{thm}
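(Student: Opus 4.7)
The plan is to deduce this equivalent form of the Schönflies theorem from the version more standard in the literature: for every Jordan curve $C\subset\mathbb R^2$ there is a homeomorphism $\eta:\mathbb R^2\to\mathbb R^2$ carrying $C$ onto the unit circle $S^1$, and, moreover, the bounded complementary component of $C$ is carried onto the open unit disk $\mathbb D$. Granting this, the argument is a brief reduction: apply it to $\partial E$ and $\partial G$ separately, then compose.

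First I would check that the topological boundary in $\mathbb R^2$ of a disk $E$ is indeed a Jordan curve. By definition $E$ is the image of the closed unit ball $\overline{\mathbb D}$ under some homeomorphism $\varphi:\overline{\mathbb D}\to E\subset\mathbb R^2$. Since $\varphi$ is a continuous injection of $\overline{\mathbb D}$ into $\mathbb R^2$, invariance of domain shows that $\varphi(\mathbb D)$ is open in $\mathbb R^2$ and hence equals $\mathrm{Int}(E)$; consequently $\varphi(S^1)=\partial E$, which is a Jordan curve. The same applies to $G$, and moreover $E$ is the union of $\partial E$ with the bounded complementary component of $\partial E$ (and likewise for $G$).

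Next, the classical Jordan--Schönflies theorem (as stated, for instance, in \cite[p.\,72]{Mo77}) yields homeomorphisms $\eta_E,\eta_G:\mathbb R^2\to\mathbb R^2$ with $\eta_E(\partial E)=\eta_G(\partial G)=S^1$ and, by the strong form just recalled, $\eta_E(E)=\eta_G(G)=\overline{\mathbb D}$. Setting $\zeta:=\eta_E^{-1}\circ\eta_G:\mathbb R^2\to\mathbb R^2$ then gives a homeomorphism of the plane with $\zeta(G)=\eta_E^{-1}(\overline{\mathbb D})=E$, as required.

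Thus the only real content is the Jordan--Schönflies theorem itself, which the authors invoke as a known fact; the main (and essentially only) obstacle, namely the passage from a Jordan curve to its bounded side, is absorbed into the strong form of that theorem. Once this is available, the reduction above is formal and uses no planar dynamics.
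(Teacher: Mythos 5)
Your proposal is correct: the reduction from the classical Jordan--Sch\"onflies theorem (boundary of a disk is a Jordan curve by invariance of domain, the strong Sch\"onflies theorem sends each disk onto the closed unit disk by a plane homeomorphism, then compose) is exactly the standard argument behind the equivalence that the paper invokes, since the paper itself gives no proof and simply cites this statement as a well-known equivalent form of the Sch\"onflies theorem from Moise. The only step you gloss over is that $\varphi(S^1)=\partial E$ also needs invariance of the boundary (no point of $\varphi(S^1)$ can be interior to $E$), but this is routine and does not affect the argument.
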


 Let \ $d$ \ be the Euclidean metric on \ $\mathbb R^{\,2}$.\ \ For any homeomorphism\ \
$h:\mathbb R^{\,2}\to\mathbb R^{\,2}$\ \ and any \ $x\,\in\,\mathbb R^{\,2}$,\ \ the orbit \ $O(x,h)$ \ is
said to be \,{\it positively bounded} (\,resp.\ \,{\it
negatively bounded}\,)\ \,if \ $O_+(x,h)$ (\,resp.\ \ $O_-(x,h)$ )\ \ is bounded\,. \
If \ $O(x,h)$ \ is not positively bounded (\,resp.\,\ not negatively bounded\,)\,, \,
then\,\,\ $O(x,h)$\,\,\ is said to be\ \,{\it positively unbounded}\ (\,resp.\ \,{\it
negatively unbounded}\,)\,.\ \
\medskip

The following lemma is clear\,.

\begin{lem}\label{bound} Let\ \ $h:\mathbb R^{\,2}\rightarrow \mathbb R^{\,2}$\ \ be a  homeomorphism\,,\,\
and \ $x\,\in\,\mathbb R^{\,2}$.\ \ Then

$(1)$\ \ \ $O(x,h)$ \ is \,positively \,divergent \,if \,and \,only \,if\ \ \,$\omega(x,h
)\,=\ \emptyset\ ;$

$(2)$\ \ \ If\ \ \,$O(x,h)$ \ is \,positively \,unbounded\;,\ \; then\ \ \,$\omega(x,h)\,
\neq\;\emptyset$\ \ if \,and \,only \,if\ \ \,$\omega(x,h)$\ \ is \,an \,unbounded \,set $;$

$(3)$\ \ \ If\ \ \,$\omega(x,h)$ \ is \,a \,nonempty \,bounded \,set\,, \,then \ $O(x,h)$
\ is \,positively \,bounded.
\medskip

In the negative direction of the orbit\,\,\,\,$O(x,h)$\ ,\ \,we also have
similar conclusions\,.
\end{lem}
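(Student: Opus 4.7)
The plan is to prove (1)--(3) by elementary arguments based on Bolzano--Weierstrass in $\mathbb R^{\,2}$, the continuity of $h$, and the forward invariance $h(\omega(x,h)) \subseteq \omega(x,h)$, which follows immediately from the definition of the $\omega$-limit set by passing to the limit in $h^{n_i+1}(x) = h(h^{n_i}(x))$. For (1), if $O(x,h)$ is positively divergent then, for every $y \in \mathbb R^{\,2}$, $d(h^n(x), y) \geq d(h^n(x), (0,0)) - d((0,0), y) \to \infty$, so no subsequence of $(h^n(x))$ converges and $\omega(x,h)=\emptyset$. Conversely, if $O(x,h)$ is not positively divergent, there exist $M > 0$ and indices $n_1 < n_2 < \cdots$ with $d(h^{n_k}(x), (0,0)) \leq M$; Bolzano--Weierstrass gives a convergent sub-subsequence, producing an element of $\omega(x,h)$.

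For (3), suppose $\omega(x,h)$ is nonempty and bounded, and choose $M$ so that $\omega(x,h) \subseteq B_M((0,0))$. Assuming for contradiction that $O_+(x,h)$ is unbounded, the orbit both revisits $\overline{B_{M+1/2}((0,0))}$ infinitely often (since $\omega(x,h)\neq\emptyset$) and exits it infinitely often (since it is unbounded). I would therefore extract a sequence $m_k \to \infty$ such that $d(h^{m_k}(x),(0,0)) \leq M+1/2$ and $d(h^{m_k+1}(x),(0,0)) > M+1/2$, by taking, for each sufficiently large exit index, the last preceding index at which the orbit lies in $\overline{B_{M+1/2}((0,0))}$. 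The bounded sequence $(h^{m_k}(x))$ admits a subsequence converging to some $y \in \omega(x,h) \subseteq B_M((0,0))$; continuity then gives $h^{m_k+1}(x) \to h(y)$ along the same subsequence, and forward invariance gives $h(y) \in \omega(x,h) \subseteq B_M((0,0))$, contradicting $d(h^{m_k+1}(x),(0,0)) > M+1/2$.

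Part (2) then drops out of (3): the implication ``$\omega(x,h)$ unbounded $\Rightarrow \omega(x,h) \neq \emptyset$'' is trivial, and for the other direction, if $\omega(x,h)$ were nonempty and bounded then (3) would force $O_+(x,h)$ to be bounded, contradicting the positive-unboundedness hypothesis. The analogous statements for the negative direction are obtained by applying (1)--(3) to $h^{-1}$ in place of $h$, using $\alpha(x,h) = \omega(x,h^{-1})$ and $O_-(x,h) = O_+(x,h^{-1})$. The only genuinely delicate point is the extraction of the ``last entry'' indices $m_k$ in (3); once those are chosen, continuity of $h$ combined with forward invariance of the $\omega$-limit set does all the work, which is why the lemma can reasonably be called clear.
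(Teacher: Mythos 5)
Your proof is correct, and the paper itself offers no argument at all --- it simply asserts ``The following lemma is clear'' --- so there is no authorial proof to compare against. Your write-up (Bolzano--Weierstrass for (1), the last-exit-index extraction combined with forward invariance of $\omega(x,h)$ for (3), deducing (2) from (3), and passing to $h^{-1}$ for the negative-direction statements) is exactly the standard argument that fills this gap, and the one step you flag as delicate --- choosing the indices $m_k$ with $h^{m_k}(x)$ inside and $h^{m_k+1}(x)$ outside the ball $\overline{B_{M+1/2}((0,0))}$ --- is handled correctly.
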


\begin{thm}\label{main2'}
Let \ $E$\ \ be a disk in \ $\mathbb R^{\,2}$\,.\,\
Then there exists a homeomorphism \ $h:\mathbb R^{\,2}\rightarrow\mathbb R^{\,2}$\ \;such that\,, \ for any
\ $x\in\,\partial E$\,,\ \ the \,orbit \ $O(x,h)$\ \;is \,bounded\;, \,but \,for any\ \ $y
\in\,\stackrel{\ \circ}{E}$\,,\ \ the \,orbit \ $O(y,h)$ \;is \,doubly\,-\;\!divergent\,.
\end{thm}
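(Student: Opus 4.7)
The plan is to pipe Lemma \ref{6-points} through a coordinate change that sends the two ``central'' fixed points $u_2=(0,1)$ and $u_5=(0,-1)$ to infinity in $\mathbb R^{\,2}$, and then invoke the Sch\"{o}nflies theorem to accommodate an arbitrary disk. By Theorem \ref{sch}, it suffices to construct $h$ together with \emph{some} disk $E_0\subset\mathbb R^{\,2}$ for which the orbit dichotomy holds; an arbitrary $E$ is then handled by conjugating $h$ with any planar homeomorphism carrying $E_0$ to $E$. Since every self-homeomorphism of $\mathbb R^{\,2}$ extends to the $2$-sphere fixing $\infty$, such a conjugation preserves both boundedness and divergence of orbits.

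First, I would invoke Lemma \ref{6-points} to obtain a normally rising homeomorphism $f:J^{\,2}\to J^{\,2}$ such that, for the rectangle $F=[-1/2,1/2]\times[1/3,1/2]$, every point of $\stackrel{\ \circ}{F}$ has $\omega$-limit set $\{u_2\}$ and $\alpha$-limit set $\{u_5\}$, while every point of $\partial F$ has both limit sets drawn from $\{u_1,u_3,u_4,u_6\}$. All six $u_i$ lie in $J_1\cup J_{-1}$ and so, by Lemma \ref{rising-prop}(1), are fixed by $f$; thus $f$ restricts to a self-homeomorphism of $J^{\,2}\setminus\{u_2,u_5\}$. The latter space is a closed disk minus two boundary points, hence homeomorphic to a closed strip $S=\mathbb R\times[-1,1]$. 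A concrete model is the composition $z\mapsto\log\bigl(i(1+z)/(1-z)\bigr)$ applied after identifying $J^{\,2}$ with the closed unit disk so that $u_2$ and $u_5$ become the antipodal boundary points $\pm 1$. Call the resulting homeomorphism $\eta:J^{\,2}\setminus\{u_2,u_5\}\to S$; it has the crucial property that a sequence approaching $u_2$ (resp.\ $u_5$) in $J^{\,2}$ has $\eta$-image whose first coordinate tends to $+\infty$ (resp.\ $-\infty$).

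Setting $\eta^\ast(u_2)=\eta^\ast(u_5)=\infty$ yields a continuous surjection $\eta^\ast:J^{\,2}\to S^\ast$, where $S^\ast$ is the one-point compactification of $S$. Conjugating $f$ by $\eta^\ast$ produces a homeomorphism $g^\ast:S^\ast\to S^\ast$ fixing $\infty$ (continuity at $\infty$ follows because $f$ fixes both $u_2$ and $u_5$), and $\eta^\ast$ is then a topological semi-conjugacy between two homeomorphisms of compact metric spaces. Lemma \ref{semi-conj} therefore gives $\omega(\eta^\ast(x),g^\ast)=\eta^\ast(\omega(x,f))$ and $\alpha(\eta^\ast(x),g^\ast)=\eta^\ast(\alpha(x,f))$ for every $x\in J^{\,2}$. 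Taking $E_0=\eta(F)$, a topological disk in $S$, the following dichotomy is immediate: for $x\in\partial E_0$ both limit sets are finite points among $\eta(u_1),\eta(u_3),\eta(u_4),\eta(u_6)\in S$, so by Lemma \ref{bound}(3) the orbit is bounded; for $y\in\mathrm{Int}(E_0)$ both limit sets in $S^\ast$ equal $\{\infty\}$, which by Lemma \ref{bound}(1) means the $g$-orbit is doubly divergent in $\mathbb R^{\,2}$.

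It remains to extend $g=g^\ast|S$ from $S\subset\mathbb R^{\,2}$ to a homeomorphism $h$ of the whole plane. The key observation is that $f$ is the identity on $J_1\cup J_{-1}$ by Lemma \ref{rising-prop}(1), so on the two boundary lines of $S$ the map $g$ is the identity outside the bounded intervals that are the $\eta$-images of the lateral edges $\{\pm1\}\times J$ of $J^{\,2}$. Hence $g|_{\partial S}$ is compactly supported, and $g$ extends routinely to a homeomorphism $h:\mathbb R^{\,2}\to\mathbb R^{\,2}$ by linearly interpolating to the identity within a collar of unit width outside $S$ and taking the identity farther away. Since the strip $S$ is $h$-invariant, $h$-orbits of points in $S$ coincide with $g$-orbits and inherit the required behavior; conjugating by a Sch\"{o}nflies homeomorphism sending $E_0$ to the given disk $E$ then completes the argument. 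The principal obstacle is verifying that $\eta$ can be chosen so that $\eta^\ast$ is continuous and $g|_{\partial S}$ is compactly supported; both reduce to the simple edge behavior of normally rising homeomorphisms recorded in Lemma \ref{rising-prop}.
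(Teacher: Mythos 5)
Your argument is correct, but it reaches the plane by a genuinely different route than the paper. The paper keeps everything inside $J^{\,2}$: it builds a non-injective collapsing map $\xi$ that crushes the four arcs $[\,u_1x_1\,],[\,u_3x_3\,],[\,u_4x_4\,],[\,u_6x_6\,]$, thereby relocating the four ``bounded'' limit points $u_1,u_3,u_4,u_6$ from $\partial J^{\,2}$ to the interior points $x_1,x_3,x_4,x_6$ while leaving $u_2,u_5$ on the boundary; it then conjugates $g=\xi f\xi^{-1}$ by the tangent homeomorphism $\psi:(-1,1)^2\to\mathbb R^{\,2}$, so that orbits limiting on $\partial J^{\,2}$ diverge and the others stay bounded, and no extension problem ever arises because $\psi g\psi^{-1}$ is already defined on all of $\mathbb R^{\,2}$. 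You instead delete only $u_2,u_5$, identify $J^{\,2}\setminus\{u_2,u_5\}$ with a closed strip sending those two points to the two ends, and leave $u_1,u_3,u_4,u_6$ at finite positions on the strip's boundary lines; the price is the extension of $g$ from the strip to the plane, which you correctly reduce to the fact that a normally rising homeomorphism is the identity on $J_1\cup J_{-1}$, so $g|\partial S$ is compactly supported and the Alexander-type collar interpolation applies (one should note, as you implicitly do, that $g$ is monotone on each boundary line so the linear interpolation is injective). What each approach buys: the paper must verify that $g=\xi f\xi^{-1}$ is well defined and a homeomorphism despite $\xi$ failing to be injective on four arcs, but gets the planar homeomorphism in one stroke; you avoid any collapsing inside the square (the only identification is at the single point $\infty$ of $S^\ast$, where $f$ fixes both preimages, so Lemma \ref{semi-conj} applies cleanly), at the cost of the extension step and of checking continuity of $\eta^\ast$ and $g^\ast$ at $\infty$. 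Both proofs use Lemmas \ref{6-points}, \ref{semi-conj}, \ref{bound} and Theorem \ref{sch} in the same roles, and your reduction of the general disk $E$ to a single model disk via Sch\"onflies conjugation is the same final move as the paper's.
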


\begin{proof}
Continue to use the all notations in Lemma \ref{6-points}\,. \,Let
\ $v_1\,,\,\cdots\,,\,v_4$ \ and \ $w_1\,,\,\cdots\,,\,w_8$\ \ be points\ \,in \ \,$J
\times\,\partial J$ \ \,with \ \ ( see Fig. 3.1)
$$
v_1\,=\,(\,-1\,,\,1)\;,\ \ \ \ \ \ \,v_2\,=\,(\,1\,,\,1)\;,\ \ \ \ \ \ \,v_3\,=\,(\,-
1\,,\,-1)\;,\ \ \ \ \ \ \,v_4\,=\,(\,1\,,\,-1)\;,
$$
$$
w_1\,=\,(\,-3/4\,,\,1)\;,\ \ \ \ \,w_2\,= \,(\,-1/4\,,\,1)\;,\ \ \ \ \,w_3\,=\,(\,1/4
\,,\,1)\;,\ \ \ \ \,w_4\,=\,(\,3/4\,,\,1)\;,                            \vspace{-0mm}
$$
and \ $w_{\,i\,+\,4}\,=\Psi_v(w_i)$\ \ for \ $i\,\in\,\mathbb N_{\:\!4}$\,. \ \,Let \ \,$x
_1\,,\ \cdots\,,\ x_6$ \ \,be points \,in \ $\stackrel{\ \circ}{J}$$^{\,2}$\ \ with
\vspace{-1mm}$$
x_1\,=\,(\,-1/2\,,\,3/4)\;,\ \ \ \ \ \ \ \ \ x_2\,= \,(\,0\,,\,3/4)\;,\ \ \ \ \ \ \ \
\ x_3\,=\,(\,1/2\,,\,3/4)                                               \vspace{-1mm}
$$
and\ \ \,$x_{\,i\,+\,3}\,=\Psi_v(x_i)$ \ \,for\ \;\,$i\,\in\,\mathbb N_{\:\!3}$\,.
For any points \ $y_1\;,\ y_2\;,\ \cdots\;,\ y_n$ \ in \ $\mathbb R^{\,2}$\ \ with \ $n\geq 2$\,,\
\ denote by \ $[\,y_1\,,\,y_2\,,\,\cdots\,,\,y_n\,]$ \ or by \ $[\,y_1\,y_2\,\cdots\,
y_n\,]$ \ the smallest convex set containing \ $y_1\,,\,y_2\,,\,\cdots\,,\,y_n$\;.\ \
\,Clearly\,, \,there is a continuous map \ \ $\xi:J^{\,2}\,\rightarrow\,J^{\,2}$ \ satisfying
the following conditions\ :

\vspace{3mm}(a) \ \ $\xi\,|\,\big(\,J\times[\,-1/2\,,\,1/2\,]\,\big)\,\cup\,\big(\,\{
\,-1\,,\,0\,,\,1\}\times\,J\,\big)$\ \ is the identity map\ ;

\vspace{2mm}(b)\ \ \ \ $\xi(u_3)\;=\;x_3\;,\ \ \ \xi(w_3)\;=\;\xi(w_4)\;=\;u_3$\;,\ \
\ \ \ and\ \ \ \ \ $\xi\,|\,[\,u_2\,w_3\,]\;,\ \ \ \xi\,|\,[\,w_3\,u_3\,]\;,\ \ \ \xi
\,|\,[\,u_3\,w_4\,]$\ \ \ and\ \ \ $\xi\,|\,[\,w_4\,v_2\,]$\ \ \ are linear\ ;

\vspace{2mm}(c)\ \ \ $\xi\;\!\Psi\,=\;\Psi\,\xi$\ ,\ \ \ and\ \ \ $\xi\;\!\Psi_v\,=\;\Psi_v\,
\xi$ , \ \,where \ $\Psi:J^{\,2}\,\rightarrow\,J^{\,2}$ \ is \,the \;level \,reflect\,, \ and \
$\Psi_v:J^{\,2}\,\rightarrow\,J^{\,2}$\ \ is \,the \,vertical \,reflect\,, \ defined\ \,as \,in
\,(\ref{eq:3.1})\;.

\vspace{2mm}(d) \ \ $\xi\,|\,\stackrel{\ \circ}{J}$$^{\,2}$\ \,is an injection\,,\ \ and\ \
$\xi(\,\stackrel{\ \circ}{J}$$^{\,2}\,)\,=\,\stackrel{\ \circ}{J}$$^{\,2}\,-\,[\,u_1 x_1\,]\,-\,[
\,u_3x_3\,]\,-\,[\,u_4x_4\,]\,-\,[\,u_6x_6\,]$\ .

\vspace{4mm}Let\ \ $f:J^{\,2}\rightarrow J^{\,2}$\ \ be the homeomorphism given in Lemma \ref{6-points}\
.\ \ Define a map\ \ $g:J^{\,2}\to J^{\,2}$\ \ by\ \ $g\;=\;\xi\,f\,\xi^{\;-1}$\,.\ \
Note that\,,\ \,if\ \ \,$x\,\in\;[\,u_1x_1\,]\,\cup\;[\,u_3x_3\,]\,\cup\;[\,u_4x_4\,]
\,\cup\;[\,u_6x_6\,]\,-\,\{\,x_1\,,\,x_3\,,\,x_4\,,\,x_6\,\}$\;,\ \ then \ $\xi^{\;-1
}(x)$ \ contains two points\,,\ \,but \ $\xi\,f\,\xi^{\;-1}(x)$ \ still contains only
one point\,.\ \,Thus \ $g$ \ is well defined\,. \ It is easy to see that \ $g$ \ is a
bijection\,,\ \,and \ $g$\ \ is continuous\,. \,Thus \ $g:J^{\,2}\to J^{\,2}$ \ is an
orientation preserving homeomorphism\,.\ \ Moreover\,, \ from \ $g\;=\;\xi\,f\,\xi^{\;
-1}$ \ we obtain\ \ $g\xi\;=\;\xi\,f$\,, \,this means that \ $f$\ \ and\ \ $g$\ \ are
topologically semi-conjugate\,, \,and \ $\xi$ \ is a topological semi-conjugacy from\
\ $f$\ \ to \ $g$\ .\ \ By \,Lemmas \ref{6-points} \,and \, \ref{semi-conj} \,we get

\vspace{2mm}{\bf Claim 3.4.1.}\ \ \ (1)\ \ \ \ $\omega(x,g)\,=\,\{x_1\}$\ \ \ and \ \ $\alpha
(x,g)\,=\,\{x_4\}$\ \ \ \ for any\ \ \ $x\in L_1$\ ,\ \ \

(2)\ \ \ \ $\omega(y,g)\,=\,\{u_2\}$\ \ \ and \ \ $\alpha(y,g)\,=\,\{u_5\}$\ \ \ \ for any\ \
\ $y\in \stackrel{\ \circ}{F}$\ ,\ \ \ \ and

(3)\ \ \ \ $\omega(z,g)\,=\,\{x_3\}$\ \ \ and \ \ $\alpha(z,g)\,=\,\{x_6\}$\ \ \ \ for any\ \
\ $z\in L_2$\;.\ \ \

\vspace{4mm}Define a homeomorphism \ $\psi\,:\,\stackrel{\ \circ}{J}$$^{\,2}\,\rightarrow\,\mathbb R^{\,
2}$ \ by \ $\psi(r,s)\ =\ \big(\,{\rm tg}(\;\!\pi r/2)\ ,\;{\rm tg}(\;\!\pi s/2)\,\big)
$\,,\ \ for any \ $(r,s)\,\in\,\stackrel{\ \circ}{J}$$^{\,2}$\,.\ \ Write\ \ $G\,=\;\psi(F)$\
.\ \,Then \ $G\,=\,[\,-1,1\,]\times[\,\sqrt{3}\,/\,3\;,1\,]$ \ is also a rectangle\,,
\,and \ $\psi(L_1)\,=\,\{-1\}\times(\,\sqrt{3}\,/\,3\;,1\,)$\ , \ $\psi(L_2)\,=\;\partial G\,-
\,\psi(L_1)$ . \ By Theorem \ref{sch}\,, \ there exists a homeomorphism \ $\zeta:\mathbb R^{\,2}\rightarrow\mathbb R^{\,
2}$\ \ such that \ $\zeta(G)\,=\,E$ .\ \ Let \ \,$h\;=\ \zeta\psi\,g\,\psi^{\,-1}\zeta^{\,-1}\,:\,
\mathbb R^{\,2}\,\rightarrow\,\mathbb R^{\,2}$\,.\ \ Then \ $h$\ \ is also an orientation preserving
homeomorphism\,, \ which is topologically conjugate to \ $g$ ,\ \ and \ $\zeta\psi$ \ is a
topological conjugacy from \ $g$ \ to \ $h$. \ By \ Claim 3.4.1 \,and \,Lemma \ref{semi-conj}\,,
\ \;we have

\vspace{2mm}{\bf Claim 3.4.2.}\ \ (1)\ \ $\omega(x,h)\,=\,\{\zeta\psi(x_1)\}$, \ and \ $\alpha(x,h
)\,=\,\{\zeta\psi(x_4)\}$\,,\ \ for  \vspace{1mm}  any\ \ $x\in\zeta\psi(L_1)$\;;

(2) \ \ $\omega(y,h)\,=\ \alpha(y,h)\,=\ \emptyset$\,, \ \ \ for any \ \ $y\;\in\ \zeta\psi(\stackrel{\ \circ}
{F})$\ ;                                                                 \vspace{1mm}

(3)\ \ \ $\omega(z,h)\,=\,\{\zeta\psi(x_3)\}$\,, \ \ and \ \ $\alpha(z,h)\,=\,\{\zeta\psi(x_6)\}$\,,\ \
\ \ for \,any\ \ $z\,\in\,\zeta\psi(L_2)$\;.

\vspace{4mm}Noting that\ \ $\stackrel{\ \circ}{E}\;=\ \zeta\psi(\stackrel{\ \circ}{F})$\; \ and \ $\partial
E\,=\,\zeta\psi(\partial F)\,=\,\zeta\psi(L_1)\,\cup\,\zeta\psi(L_2)$\ ,\ \ from\ \,Claim 3.4.2\ \,and\,\
Lemma \ref{bound} \ we see that the homeomorphism \ $h$ \ satisfies the requirement. \ The \,proof \,is \,complete\,.
\end{proof}

\setlength{\unitlength}{1mm}
\begin{picture}(100,100)
\put(0,50){\vector(1,0){100}}
\put(50,5){\vector(0,1){90}}
\put(10,10){\line(1,0){80}}
\put(10,10){\line(0,1){80}}
\put(10,90){\line(1,0){80}}
\put(90,10){\line(0,1){80}}
\put(10,10){\circle*1}
\put(20,10){\circle*1}
\put(30,10){\circle*1}
\put(40,10){\circle*1}
\put(50,10){\circle*1}
\put(60,10){\circle*1}
\put(70,10){\circle*1}
\put(80,10){\circle*1}
\put(90,10){\circle*1}
\put(10,90){\circle*1}
\put(20,90){\circle*1}
\put(30,90){\circle*1}
\put(40,90){\circle*1}
\put(50,90){\circle*1}
\put(60,90){\circle*1}
\put(70,90){\circle*1}
\put(80,90){\circle*1}
\put(90,90){\circle*1}
\put(30,20){\circle*1}
\put(50,20){\circle*1}
\put(70,20){\circle*1}
\put(30,80){\circle*1}
\put(50,80){\circle*1}
\put(70,80){\circle*1}
\put(20,64){$L_1$}
\put(25,64){$\rightarrow$}
\put(76,64){$L_2$}
\put(71,64){$\leftarrow$}
\put(55,64){$F$}
\put(8,92){\scriptsize$v_1$}
\put(18,92){\scriptsize$w_1$}
\put(28,92){\scriptsize$u_1$}
\put(38,92){\scriptsize$w_2$}
\put(46,92){\scriptsize$u_2$}
\put(58,92){\scriptsize$w_3$}
\put(68,92){\scriptsize$u_3$}
\put(78,92){\scriptsize$w_4$}
\put(88,92){\scriptsize$v_2$}
\put(8,7){\scriptsize$v_3$}
\put(18,7){\scriptsize$w_5$}
\put(28,7){\scriptsize$u_4$}
\put(38,7){\scriptsize$w_6$}
\put(46,7){\scriptsize$u_5$}
\put(58,7){\scriptsize$w_7$}
\put(68,7){\scriptsize$u_6$}
\put(78,7){\scriptsize$w_8$}
\put(88,7){\scriptsize$v_4$}
\put(27,77){\scriptsize$x_1$}
\put(47,77){\scriptsize$x_2$}
\put(67,77){\scriptsize$x_3$}
\put(27,22){\scriptsize$x_4$}
\put(46,22){\scriptsize$x_5$}

\thicklines
\put(30,80){\line(0,1){10}}
\put(50,80){\line(0,1){10}}
\put(70,80){\line(0,1){10}}
\put(30,10){\line(0,1){10}}
\put(30,10){\line(0,1){10}}
\put(50,10){\line(0,1){10}}
\put(70,10){\line(0,1){10}}
\put(30,62){\line(0,1){7}}

\linethickness{2pt}
\put(30,62){\line(1,0){40}}
\put(30,69){\line(1,0){40}}
\put(70,62){\line(0,1){7}}
\put(68,22){\scriptsize$x_6$}

\put(43,-2){Figure 3.1}
\end{picture}

\subsection*{Acknowledgements}
Jiehua Mai, Kesong Yan, and Fanping Zeng are supported by NNSF of China (Grant No. 12261006); Kesong Yan is also supported by NNSF of China (Grant No. 12171175); Enhui Shi is supported by NNSF of China (Grant No. 12271388).

\end{document}